%------------------------------------------------------------------------------
% Beginning of journal.tex
%------------------------------------------------------------------------------
%
% AMS-LaTeX version 2 sample file for journals, based on amsart.cls.
%
%        ***     DO NOT USE THIS FILE AS A STARTER.      ***
%        ***  USE THE JOURNAL-SPECIFIC *.TEMPLATE FILE.  ***
%
% Replace amsart by the documentclass for the target journal, e.g., tran-l.
%
\documentclass{amsart}
\usepackage{amstext, amssymb, amsthm, amsfonts, latexsym,bbm,tikz}
\usetikzlibrary{matrix}
\usepackage{varioref}
\usepackage{chemmacros}%     If your article includes graphics, uncomment this command.
\usepackage{todonotes}
\usepackage{enumitem}
\usepackage{bbm}
\usepackage[utf8]{inputenc}
\usepackage[english]{babel}
\usepackage{tikz}
\usetikzlibrary{arrows} 
\usepackage{comment}
\usepackage{tikz-cd}
\usepackage{hyperref}

%The following to fix table postions
\usepackage{float}
\restylefloat{table}
\usepackage{placeins}

\usepackage{color}
 
\definecolor{myred}{RGB}{ 204,102,119}

\definecolor{myblue}{RGB}{68,119,170}
\definecolor{myyellow}{RGB}{221,204,119}
\definecolor{mygray}{gray}{0.6}

\newtheorem{theorem}{Theorem}[section]
\newtheorem{lemma}[theorem]{Lemma}
\newtheorem{corollary}[theorem]{Corollary}
\newtheorem{proposition}[theorem]{Proposition}
\newtheorem{claim}[theorem]{Claim}

\theoremstyle{definition}
\newtheorem{definition}[theorem]{Definition}
\newtheorem{example}[theorem]{Example}

\theoremstyle{remark}
\newtheorem{remark}[theorem]{Remark}

\theoremstyle{Conjecture/open problem}

\def\la{\lambda}

\def\a{\alpha}
\def\be{\beta}

\def\Ga{\Gamma}
\def\1{\mathbf{1}}
\def\la{\lambda}

\def\p{^{\prime}}

\def\k{\kappa}

\def\cG{\mathcal{G}}
\def\cR{\mathcal{R}}
\def\cS{\mathcal{S}}
\def\cC{\mathcal{C}}

\def\S{\mathbb{S}}

\def\R{\mathbb{R}}
\def\N{\mathbb{N}}
\def\M{\mathcal{M}}
\def\Z{\mathbb{Z}}

\setcounter{MaxMatrixCols}{20}

\numberwithin{equation}{section}

%    Absolute value notation

%    Blank box placeholder for figures (to avoid requiring any
%    particular graphics capabilities for printing this document).

\begin{document}

\title{Stationary distributions and condensation in autocatalytic CRN}

\author[Linard Hoessly]{Linard Hoessly}
\address{(Linard Hoessly) Department of Mathematics, University of Fribourg, Chemin du Mus\'ee 23, CH-1700, Fribourg, CH.}
\email{linard.hoessly@unifr.ch}
\date{}

\author[Christian Mazza]{Christian Mazza}
\address{(Christian Mazza) Department of Mathematics, University of Fribourg, Chemin du Mus\'ee 23, CH-1700, Fribourg, CH.}
\email{christian.mazza@unifr.ch}
\date{}
\subjclass[2010]{92B05,92E20,80A30,92C42,60J28,60K35,82C20}
\keywords{Reaction networks, mass-action system, Product-form stationary distributions, Markov process, inclusion process, condensation }

\begin{abstract} 
We investigate a broad family of non weakly reversible stochastically modeled reaction networks (CRN), by looking at their steady-state distributions. Most known results on stationary distributions assume weak reversibility and zero deficiency. We first give explicitly product-form steady-state distributions for a class of non weakly reversible autocatalytic CRN of arbitrary deficiency. Examples of interest in statistical mechanics (inclusion process), life sciences and robotics (collective decision making in ant and robot swarms) are provided. The product-form nature of the steady-state then enables the study of condensation in particle systems that are generalizations of the inclusion process.
\end{abstract}

\maketitle

%\tableofcontents

\section{Introduction}
Understanding the dynamics of reaction networks (CRN) is of central importance in a variety of contexts in life sciences and complex systems, including molecular and cellular systems biology, which are some of the most vital areas in bioscience. Two approaches are used to model reaction network systems,  either a deterministic or a stochastic model. The first is realized as a vector with concentrations of each molecular species as state space governed by a system of ordinary differential equations (ODE), whereas the second is described by a continuous-time Markov chain acting on discrete molecular counts of each molecular species. Typically the stochastic model is used for cases with low molecular numbers where stochasticity is essential for the proper description of the dynamics. Au contraire the deterministic model is used for cases with many molecules in each species and where it is assumed that coupled ODEs well approximate the concentrations.

The study of the dynamics of the deterministic model, mass-action kinetics in particular with complex balanced states \cite{horn4,feinberg1}, is a well-studied subject going back 
more than 100 yrs. \cite{Wegscheider,Michaelis_M}.
%to the mathematical chemistry literature initiated by Horn, Jackson and Feinberg \cite{horn4,feinberg1}.
Understanding of such and more general ODEs from chemical reaction network theory developed to more subtle questions, like, e.g. multistationarity, persistence, etc. \cite{overv_Gorban}. Conversely, the stochastic system is analyzed via master equation. No analytic solutions are known for most systems, even concerning stationary distributions.  Consequently simulation methods and approximation schemes of different exactness, roughness and rigor were developed in order to understand such systems \cite{gillespieb,gardiner,cao,irrev},  making a systematic investigation of fundamental effects of noise and statistical inference a demanding job. 
Our results are a step towards the rigorous analysis of the product-form stationary distribution of non-weakly reversible ergodic stochastic CRNs of arbitrary deficiency. We 
exhibit product-form stationary distributions $\pi_N$ for a large class of autocatalytic mass preserving CRNs, including the models in \cite{Saito1,Saito2,Bianc,Bianc2,Swarm,condesates} (which were studied via simulation and approximations) and generalizing results of \cite{Houch,inclusion_proc}. The emerging infinite family of product-form functions in the stationary distributions (with Poisson form as a particular case) is also possibly interesting from the view of natural computation, where it extends the range of designable probability distributions of CRNs, see e.g. \cite{Cardelli2018,Soloveichik5393}. We illustrate the occurrence of such CRNs in interacting particle system theory and life sciences for collective decision making processes in ant or robot swarms.

The relation between the deterministic and the stochastic model as well as their differences are a focus of current research \cite{Bianc,Bianc2,Houch,Saito1,Saito2,Cappelletti,anderson2}. Kurtz \cite{kurtz2} linked the short term behavior of the adequately scaled continuous-time Markov chain to the dynamics of the ODE model. These results are based on the classical mean field scaling which assumes that the system is well mixed. Then the probability that a set of molecules meet in small volume is proportional to the product of the molecular concentrations $x_i/V$ where $x_i$ denotes the absolute number of molecules of type $S_i$, and where $V$ is the volume which is assumed to be large. Within this modeling framework, the orbits of the %\todo{intended?(time continuous)}
continuous-time Markov chain describing the stochastic CRNs converges as $V\to\infty$ towards the orbits of the mass-action ODEs. 
This convergence was also considered recently from the point of view of large deviation theory \cite{eckmann}. 
The insight of the complex balanced deterministic model was recently transferred to the stochastic model: A deterministic system is complex balanced if and only if the stochastically modeled system has product-form of Poisson type \cite{anderson2,Cappelletti}, where the parameter of the Poisson distributions are given by the stable equilibrium values of the related deterministic mass-action dynamic. 

CRNs with stochastic behavior differing from the behavior of the deterministic CRN due to molecular discreteness and stochasticity were identified. The mathematical analysis is based on approximations \cite{Saito1,Saito2,Bianc,Bianc2,Swarm,condesates} in the ergodic case, or on the analysis of absorbing states for absolute concentration robust CRNs \cite{Anderson14,Anderson16,Enciso16}.  In the ergodic case such behavior appeared in the literature as noise-induced bi-/multistability\cite{Bianc,Bianc2}, small-number effect \cite{Saito1,Saito2} or noise-induced transitions \cite{noise_ind}.  %A systematic analysis of ergodic network motifs on two species together with a criteria for CRN to have small-number effect was proposed in \cite{Saito1}. This effect describes the phenomena that the stochastic dynamics changes by going from a many-molecules to a few-molecules condition.%\todo{Better:by going from microscopic molecular discreteness to the meso- or macroscopic regime, meaning that by increasing the number of molecules of the system the behavior of the stochastic CRN changes significantly}
%Noise-induced bi-/multistability was studied in \cite{Bianc,Bianc2}, where some CRN have switching bistable/multistable behavior if considered stochastically in the right range, whereas the deterministic model has a single stable fixed point and is anyway deterministic, so differs significantly. %Noise-induced bistability on two species was also studied in \cite{popo}, however they considered also additional types of stochastic interactions inducing bistability.\\
Our setting includes examples from \cite{Bianc,Bianc2,Swarm} and some examples of \cite{Saito1,Saito2}. Hence we shed light on such instances by providing product-form stationary distributions and enabling exact analysis for the class of autocatalytic CRNs (see Definition \ref{autocatalyticCRN} and Remark \ref{abbr}). 
%\todo{Is that better?}%We shed light on these results by providing exact results for stationary distributions and by focusing on autocatalytic CRN when the total number of molecules $N$ is large. 
We inspect them asymptotically when the total number of molecules $N$ is large. Taking inspiration from previous works on particle systems \cite{Giardina2009,Giardina2010,inclusion_proc,Bianchi16}, we consider non mean field transition mechanisms where particles (or molecules) are located at the nodes of a graph. Particles located at some node $i$ (or of type $S_i$) can move to nearest neighbour nodes $j$. Within this new modeling framework, the rate at which a particle moves from site $i$ to site $j$ (or that a molecule of type $S_i$ is converted into a molecule of type $S_j$) is related to the absolute numbers $x_i$ and $x_j$ of species $S_i$ and $S_j$. While a classical mean field scaling with $V=N$ would lead to convergence of $\pi_N$ as $N\to \infty$ towards a point mass centered at the positive equilibrium of the deterministic mass-action ODE, the new scaling regime leads to the emergence of condensation: the stationary distribution $\pi_N$ of autocatalytic CRNs can under some conditions converge towards limiting probability measures with supports located on the faces of the probability simplex. In other words,  the set of molecules concentrates as $N\to\infty$ on a strict subset of the set of species. 
 We investigate the asymptotic behavior of the product-form stationary distribution $\pi_N$, putting emphasis on the cases of up to molecularity three in our model with respect to three different forms of condensation. We observe that monomolecular autocatalytic CRNs (see Definition \ref{autocatalyticCRN} and Remark \ref{abbr}) and complex balanced CRNs do not satisfy any form of condensation. We generalize a Theorem from \cite{inclusion_proc} to allow more general product-form functions and prove, for the up to bimolecular case, a weak form of condensation and a weak law of large numbers. In the threemolecular and higher case, we show that such systems exhibit the strongest form of condensation. 
 \subsection*{Acknowledgements}
The first author is supported by Swiss National Science Foundation grant number PP00P2\_179110 and thanks Jan Draisma and Manoj Gopalkrishnan for helpful dsicussions. Furthermore we thank Badal Joshi, Daniele Cappelletti and two anonymous referees for valuable feedback.

%%%%%%%%%%%%%%%%%%%%%%%%%%%%%%%%%%%%%%%%%%%%%%%%%%%%%%%%%%%%%%%%
%%%%%%%%%%%%%%%%%%%%%%%%%%%%%%%%%%%%%%%%%%%%%%%%%%%%%%%%%%%%%%%%%

\section{Reaction networks}\label{CRN}
%%%%%%%%%%%%%%%%%%%%%%%%%%%%%%%%%%%%%%%%%%%%%%%%%%%%%%%%%%%%%%%%
%%%%%%%%%%%%%%%%%%%%%%%%%%%%%%%%%%%%%%%%%%%%%%%%%%%%%%%%%%%%%%%%%
A \textbf{reaction network} is a triple $\cG=(\cS,\cC,\cR)$, where
 $\cS$ is the set of \textbf{species} $\cS=\{S_1,\cdots,S_n\}$, $\cC$ is the set of \textbf{complexes} and $\cR$ is the set of \textbf{reactions} $\cR=\{R_1,\cdots,R_r\}$. 
 
 Complexes are made up of linear combinations of species over $\Z_{\geq 0}$,  identified  with  vectors  in $\Z_{\geq 0}^n$. 
 Reactions consist of ordered tuples $(\nu, \nu\p)\in \cR$ with $\nu,\nu\p\in\cC$.  Such a reaction consumes the \textbf{reactant} $\nu$ and creates the \textbf{product} $\nu\p$. We will typically write such a reaction in the form $\nu\to \nu\p $. We will often write complexes $\nu\in\Z_{\geq 0}^n$ in the form
$\nu=\sum_{i=1}^n\nu_i S_i.$ Accordingly we slightly abuse notation at times for complexes by identifying $\nu$ with $\sum_{i=1}^n\nu_i S_i$. 

We usually describe a reaction network by its \textbf{reaction graph} which is the directed graph with vertices $\cC$ and edge set $\cR$. A connected component of the reaction graph of $\cG$ is termed a \textbf{linkage class}. We say $\nu \in \cC$ \textbf{reacts} to $\nu\p \in \cC$ if $\nu\to \nu\p$ is a reaction. A reaction network $\cG$ is \textbf{reversible} if $\nu\to \nu\p \in\cR$ whenever $ \nu\p\to \nu \in\cR$ (different to reversibility of stochastic processes), and it is \textbf{weakly reversible} if for any reaction $\nu\to \nu\p \in\cR$, there is a sequence of directed reactions beginning with $\nu\p$ as a source complex and ending with $\nu$ as a product complex. If it is not weakly reversible we say it is \textbf{non-weakly reversible}. The \textbf{molecularity} of a reaction $\nu\to \nu\p \in\cR$ is equal to the number of molecules in the reactant $|\nu|=\sum_i \nu_i$. Correspondingly we call such reactions unimolecular, bimolecular, three-molecular or n-molecular reactions. Alternatively we say a reaction has molecularity one, two, three or n. The \textbf{stochiometric subspace} is defined as $$\mathcal{T}={\rm span}_{\nu\to\nu\p\in\cR}\{\nu-\nu\p\}\subset \mathbb{R}^n,$$
and for $v\in \R^n$, the sets $(v+\mathcal{T})\cap\R^n_{\geq 0}$ are \textbf{stochiometric compatibility classes} of $\cG$. The following invariant has proven to be important in the study of complex balanced CRNs. The \textbf{deficiency}  of a reaction network $\cG$ is defined as
 $$\delta=|\cC|-\ell-{\rm dim}(\mathcal{T}),$$
 where $\ell$ is the number of linkage classes.
% \begin{definition}[Deficiency]\hfill\break
 %For $\cG$ a reaction network define the \textbf{deficiency} 
 %$$\delta=|\cC|-\ell-{\rm dim}(\mathcal{T}),$$
 %where $\ell$ is the number of linkage classes.
 %\end{definition}
 %We will also need the following definition which ensures that no molecular mass can be created or lost by reactions.

For each reaction $\nu\to \nu\p$ we consider a positive \textbf{rate constant} $\k_{\nu\to\nu\p}$; the  vector of reaction weights is defined by $\k\in\R_{>0}^\cR$
 and the CRN with rates is denoted by $(\cG,\k)$.
%In the rest of the paper we will mostly examine CRN that are mass-conserving. \cite[Summary, P1]{Wei1} showed that this assumption
% is reasonable for physically realistic system. 
For examples of reaction networks see $\S$ \ref{s.swarm}
 %%%%%%%%%%%%%%%%%%%%%%%%%%%%%%%%%%%%%%%%%%%%%%%%%%%%%%%%%%%%%%%%
%%%%%%%%%%%%%%%%%%%%%%%%%%%%%%%%%%%%%%%%%%%%%%%%%%%%%%%%%%%%%%%%%
\subsection{Deterministic model\label{s.deterministicmodel}}
%%%%%%%%%%%%%%%%%%%%%%%%%%%%%%%%%%%%%%%%%%%%%%%%%%%%%%%%%%%%%%%%
%%%%%%%%%%%%%%%%%%%%%%%%%%%%%%%%%%%%%%%%%%%%%%%%%%%%%%%%%%%%%%%%%
Here we review the main notions connected to the deterministic model.
This setting is usually termed \textit{deterministic mass-action kinetics}. The system of ODEs associated to the CRN $(\cG,\k)$ with mass-action kinetics is
$$\frac{d}{dt}x(t)=\sum_{\nu\to\nu\p\in\cR}\k_{\nu\to\nu\p}x(t)^\nu(\nu\p-\nu),$$
where for $a,b\in \R_{\geq 0}^n$ we define $a^b= \prod_{S_i\in\cS}a_i^{b_i}$ with convention $0^0=1$. The system then follows this ODE started from initial condition $x_0=x(0)\in \R^n$ and the dynamics of $x(t)\in \R^n$ models the vector of concentrations at time t. 
\begin{definition} \label{dete_bal}
A reaction network $(\cG,\k)$ with deterministic mass-action kinetics is called:
\begin{enumerate}
\item \textbf{detailed balanced} if and only if there exists a point $a\in \R^n_{>0}$ such that for all $\nu\to\nu\p\in\cR$, $\nu\p\to\nu\in\cR$
$$\k_{\nu\to\nu\p}a^\nu=\k_{\nu\p\to\nu}a^{\nu\p}.$$
\item \textbf{complex balanced} if and only if there exists a point $a\in \R^n_{>0}$ such that for all $\nu\in \cC$
$$\sum_{\nu\to\nu\p\in\cR}\k_{\nu\to\nu\p}a^\nu=\sum_{\nu\p\to\nu\in\cR}\k_{\nu\p\to\nu}a^{\nu\p}.$$
\end{enumerate}
\end{definition}
Note that if a CRN is detailed balanced or complex balanced, then it is necessarily weakly reversible. Also deficiency zero weakly reversible CRNs are complex balanced independent of the rate \cite{feinberg2}.
%%%%%%%%%%%%%%%%%%%%%%%%%%%%%%%%%%%%%%%%%%%%%%%%%%%%%%%%%%%%%%%%
%%%%%%%%%%%%%%%%%%%%%%%%%%%%%%%%%%%%%%%%%%%%%%%%%%%%%%%%%%%%%%%%%
\subsection{Stochastic model}
%%%%%%%%%%%%%%%%%%%%%%%%%%%%%%%%%%%%%%%%%%%%%%%%%%%%%%%%%%%%%%%%
%%%%%%%%%%%%%%%%%%%%%%%%%%%%%%%%%%%%%%%%%%%%%%%%%%%%%%%%%%%%%%%%%

Here we introduce the main notions connected to the stochastic model.
The setting we focus on is usually termed \textit{stochastic mass-action kinetics}. The progression of the species follows the law of a continuous-time Markov chain on state space $\Z_{\geq 0}^n$. The state at time $t$ is described by a vector $X(t)=x\in \Z_{\geq 0}^n$ which can change according to a reaction $\nu\to \nu\p$ by going from $x$ to $x+\nu\p-\nu$ with transition rate $\la_{\nu \to \nu\p}(x)$, corresponding to the consumption of $\nu$ and the production of $\nu\p$. The Markov process with intensity functions $\la_{\nu \to \nu\p}:\Z_{\geq 0}^n\to \R_{\geq 0}$ can then be given by
$$P(X(t+\Delta t)=x+\nu\p-\nu|X(t)=x)=\la_{\nu \to \nu\p}(x)\Delta t+ o(\Delta t).$$
Accordingly the generator $\mathcal{A}$ is given by
$$\mathcal{A}h(x)=\sum_{\nu \to \nu\p\in \cR} \la_{\nu \to \nu\p}(x)(h(x+\nu\p-\nu)-h(x)),$$
for $h:\Z^n\to \R$.
We focus on the usual choice, stochastic mass-action kinetics, where the transition intensity associated to the reaction $\nu\to\nu\p$ is 
\begin{equation}
\label{int}\la_{\nu \to \nu\p}(x)=\k_{\nu\to\nu\p}\frac{(x)!}{(x-\nu)!}\1_{x\geq \nu}\text{ (where }z!:=\prod_{i=1}^nz_i!\text{ for } z\in \Z^n_{\geq 0}).
\end{equation}
This uniform sampling scheme corresponds to the mean field situation where the system is \textit{well-stirred} in the sense that all particles move randomly and uniformly in the medium. The transition intensities with constants $\kappa_{\nu\to\nu\p}$ model the probability that such molecules meet in a volume element. The study of these models goes back to \cite{kelly}, \cite{whittle}. 
 In the following we fix a CRN $(\cG,\kappa)$ and introduce the main terminology from stochastics:
\begin{definition}[Decomposition of state space]
We say:\\
- A reaction $y\to y\p$ is \emph{active} on $x\in \Z_{\geq 0}^n$ if $x\geq y$.\\
- A state $u\in \Z_{\geq 0}^n$ is \emph{accessible} from $x\in \Z_{\geq 0}^n$ if there is a sequence of reactions $(y_i\to y\p_i)_{i\in [p]}$ such that:\\
 $x+\sum_{i=1}^j(y_i\p-y_i)\geq 0\forall j\in [p]$\\
 $x+\sum_{i=1}^p(y_i\p-y_i)=u$\\
 - A non-empty set $\Ga\subset\Z_{\geq 0}^n$ is an \emph{irreducible component} of $\cG$ if for all $x\in \Ga$ and all $u\in  \Z_{\geq 0}^n$, $u$ is accessible from x if and only if $u\in \Ga$.\\
% -$\cG$ is \emph{essential} if the state space is a union of irreducible components.\\
 -$\cG$ is \emph{almost essential} if the state space is a union of irreducible components except for a finite number of states.\\
 % -An irreducible component $\Ga$ is \emph{positive} if for all reactions there is a $x\in \Ga$ such that the reaction is active on that state.
 \end{definition}

 %%%%%%%%%%%%%%%%%%%%%%%%%%%%%%%%%%%%%%%%%%%%%%%%%%%%%%%%%%%%%%%%
%%%%%%%%%%%%%%%%%%%%%%%%%%%%%%%%%%%%%%%%%%%%%%%%%%%%%%%%%%%%%%%%%
\subsection{Stationary distribution and product-form stationary distribution} \label{stat}
%%%%%%%%%%%%%%%%%%%%%%%%%%%%%%%%%%%%%%%%%%%%%%%%%%%%%%%%%%%%%%%%
%%%%%%%%%%%%%%%%%%%%%%%%%%%%%%%%%%%%%%%%%%%%%%%%%%%%%%%%%%%%%%%%%

%The stationary distribution $\pi_\Ga$ on an irreducible component $\Ga$ describes the long-term behavior of the Markov chain, if it exists. In that case $\pi_\Ga$ is unique. Note that on a finite irreducible component the stationary distribution always exists\footnote{A finite state irreducible CTMC is positive recurrent hence has stationary distribution.}. %.\hfill\break
The stationary distribution $\pi_\Ga$ on an irreducible component $\Ga$ describes the long-term behavior of the Markov chain in the positive recurrent case. Then $\pi_\Ga$ is unique and corresponds to the limiting distribution (see \cite{Norris}). Note that on a finite irreducible component the stationary distribution always exists\footnote{A finite state irreducible CTMC is positive recurrent hence has a stationary distribution which is the limiting distribution.}.
Let $X(t)$ denote the underlying stochastic process associated to a reaction network on a finite irreducible component $\Ga$. Then given that the stochastic process $X(t)$ starts in $\Ga$, we have 
$$\lim_{t\to\infty}P(X(t)\in A)=\pi_\Ga(A),\text{for any } A\subset\Ga.$$
The stationary distribution is determined by the master equation of the underlying Markov chain:
\begin{equation} \label{master_eq1}
  \sum_{\nu\to\nu\p\in\cR} \pi(x+\nu-\nu\p)\la_{\nu \to \nu\p}(x+\nu -\nu\p)=\pi(x)\sum_{\nu\to\nu\p\in\cR}\la_{\nu \to \nu\p}(x),
  \end{equation}
for all $x\in\Ga$. Inserting the rate functions following mass-action kinetics gives:

\begin{equation} \label{rate}
\la_{\nu \to \nu\p}(x)=\k_{\nu\to\nu\p}\frac{(x)!}{(x-\nu)!}\1_{x\geq \nu},
\end{equation}

\begin{equation} \label{master_eq2}  \sum_{\nu\to\nu\p\in\cR} \pi(x+\nu-\nu\p)\k_{\nu\to\nu\p}\frac{(x-\nu\p+\nu)!}{(x-\nu\p)!}\1_{x\geq \nu\p}=\pi(x)\sum_{\nu\to\nu\p\in\cR}\k_{\nu\to\nu\p}\frac{(x)!}{(x-\nu)!}\1_{x\geq \nu}.\end{equation}
Solving equation (\ref{master_eq1}) is in general a challenging task, even for the mass-action case equation (\ref{master_eq2}) stays difficult. %We mostly consider CRN  with the property that it has at least one positive irreducible component. 
Remark that for mass conserving CRNs, the irreducible components are finite and the stationary distribution exists always.
Some stationary distributions of weakly reversible reaction networks are well-understood. Complex balanced CRNs have a nice and simple product-form stationary distribution.
 \begin{theorem}\cite[Theorem 4.1]{anderson2} Let $(\cG,\k)$ be a CRN that is complex balanced. Then for any irreducible component $\Ga$, the stochastic system has product-form stationary distribution of the form
 $$\pi(x)=M_\Ga \frac{c^x}{x!},x\in\Ga,$$
 where $c\in\R^n_{>0}$ is a point of complex balance and $M_\Ga$ is a normalizing constant.
 \end{theorem}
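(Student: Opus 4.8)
The plan is to verify directly that the proposed measure $\pi(x)=M_\Ga\, c^x/x!$ solves the stationary master equation (\ref{master_eq2}) on $\Ga$, and in fact that it satisfies a stronger, complex-by-complex balance that mirrors the deterministic complex balance condition of Definition \ref{dete_bal}(2). First I would substitute the mass-action rate (\ref{rate}) into each outgoing term and exploit the key cancellation: writing $c^x=c^\nu c^{x-\nu}$, the ratio $x!/(x-\nu)!$ coming from the intensity cancels the $x!$ in the denominator of $\pi$, leaving
$$\pi(x)\la_{\nu\to\nu\p}(x)=M_\Ga\,\k_{\nu\to\nu\p}\,c^\nu\,\frac{c^{x-\nu}}{(x-\nu)!}\,\1_{x\geq \nu}.$$
Applying the same substitution to the incoming term $\pi(x+\nu-\nu\p)\la_{\nu\to\nu\p}(x+\nu-\nu\p)$, and using $(x+\nu-\nu\p)-\nu=x-\nu\p$ together with $c^{x+\nu-\nu\p}=c^\nu c^{x-\nu\p}$, produces $M_\Ga\,\k_{\nu\to\nu\p}\,c^\nu\,\frac{c^{x-\nu\p}}{(x-\nu\p)!}\,\1_{x\geq \nu\p}$.

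Next I would regroup both sides of (\ref{master_eq2}) by complex. Setting $\phi(z):=c^z/z!$ for $z\in\Z_{\geq 0}^n$, the total outgoing flux becomes $\sum_{\eta\in\cC}\phi(x-\eta)\1_{x\geq \eta}\,c^\eta\sum_{\eta\to\nu\p}\k_{\eta\to\nu\p}$ (collecting reactions by their reactant $\eta=\nu$), while the total incoming flux becomes $\sum_{\eta\in\cC}\phi(x-\eta)\1_{x\geq \eta}\sum_{\nu\to\eta}\k_{\nu\to\eta}c^\nu$ (collecting by product $\eta=\nu\p$). Subtracting, the residual of the master equation at a fixed state $x$ is $\sum_{\eta\in\cC}\phi(x-\eta)\1_{x\geq \eta}\,(\sum_{\nu\to\eta}\k_{\nu\to\eta}c^\nu-c^\eta\sum_{\eta\to\nu\p}\k_{\eta\to\nu\p})$.

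Then I would invoke the hypothesis. Since $(\cG,\k)$ is complex balanced with point of complex balance $c$, Definition \ref{dete_bal}(2) gives, for every complex $\eta$, the identity $c^\eta\sum_{\eta\to\nu\p}\k_{\eta\to\nu\p}=\sum_{\nu\to\eta}\k_{\nu\to\eta}c^\nu$. Hence the parenthesized factor vanishes for each $\eta$ separately, so the entire residual is zero for every $x\in\Ga$ and $\pi$ is stationary. For normalization I would use $\sum_{x\in\Z_{\geq 0}^n}c^x/x!=e^{\sum_i c_i}<\infty$, which guarantees summability of $\phi$ on any irreducible component $\Ga$; taking $M_\Ga=(\sum_{x\in\Ga}c^x/x!)^{-1}$ makes $\pi$ a probability measure supported on $\Ga$, and uniqueness follows from irreducibility of $\Ga$.

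The step requiring the most care is the bookkeeping of the indicators $\1_{x\geq \nu}$ and $\1_{x\geq \nu\p}$ near the boundary of the orthant: one must check that the regrouping by complex remains valid term by term even when individual summands drop out because $x\not\geq\eta$. The essential mechanism, however, is structural rather than computational: the deterministic complex balance condition is a per-complex identity, and the factorial cancellation converts it into a per-state identity for the stochastic master equation. This is exactly the stronger "stochastically complex balanced" property, which is what forces the product form $c^x/x!$.
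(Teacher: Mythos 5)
Your argument is correct: the factorial cancellation, the regrouping of the master equation by source and product complexes, and the per-complex vanishing via Definition \ref{dete_bal}(2) is exactly the standard verification of this result. The paper itself gives no proof (it cites \cite[Theorem 4.1]{anderson2}), and your proposal reproduces essentially the argument of that reference, including the correct handling of the indicators, since $\1_{x\geq\nu\p}$ on the incoming side and $\1_{x\geq\nu}$ on the outgoing side both become $\1_{x\geq\eta}$ after regrouping, so the common factor $\phi(x-\eta)\1_{x\geq\eta}$ multiplies the complex-balance residual at each $\eta$.
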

So each deterministic complex balanced CRN has its stochastic counterpart with product-form stationary distribution of Poisson-type.
One can prove (see, e.g., \cite{anderson2}) that, for zero deficiency CRNs, a network is complex balanced if and only if it is weakly reversible. This explains why most results on product-form distributions assume zero deficiency. We will go beyond this setting in the forthcoming Sections.
On the other hand by \cite[Theorem 5.1]{Cappelletti} any almost essential stochastic reaction network with product-form stationary distribution of Poisson-type is deterministically complex balanced. Notice that since complex balanced implies weakly reversible, these results do not apply to non-weakly reversible CRNs. %In this sense weakly reversible complex balanced CRN are well understood from both directions. 
%%%%%%%%%%%%%%%%%%%%%%%%%%%%%%%%%%%%%%%%%%%%%%%%%%%%%%%%%%%%%%%%%%%%%%%%%%%%%%%%%%%%%%%%%%
\subsection{Reaction vector balance CRNs}\hfill\break
%%%%%%%%%%%%%%%%%%%%%%%%%%%%%%%%%%%%%%%%%%%%%%%%%%%%%%%%%%%ail%%%%%%%%%%%%%%%%%%%%%%%%%%%%%%%
The notion of reversibility plays a fundamental role in Markov chain theory.
\begin{definition} A continous time Markov chain $X(t)$ with transition rates $q(x,y)$ is reversible with respect to the distribution $\pi$ if for all $x,y$ in the state space $\Ga$ we have
\begin{equation}\label{reversibility}
\pi(x) q(x,y)=\pi(y) q(y,x).
\end{equation}
\end{definition}
Note that the notions of reversible and detailed balanced for CRNs are not the same as the same terms used for Markov chains. A definition similar to detailed balanced (see Definition \ref{dete_bal}) for the stochastic model of CRNs was recently termed as reaction vector balanced \cite{JoshiCap,DetJoshi}: 
\begin{definition}\label{react_bal} Let $(\cG,\k)$ be a CRN. A stationary distribution $\pi$ on an irreducible component $\Ga\subseteq \Z_{\geq 0}^n$ is called {\it reaction vector balanced} if for every $x\in \Ga$ and every $a\in\Z^n$
\begin{equation}\label{ReacBal}
  \sum_{\nu\to\nu\p\in\cR:\nu -\nu\p= a} \pi(x+\nu-\nu\p)\la_{\nu \to \nu\p}(x+\nu -\nu\p)=\pi(x)\sum_{\nu\to\nu\p\in\cR:\nu -\nu\p= -a}\la_{\nu \to \nu\p}(x)
\end{equation}
\end{definition}
Rewriting (\ref{ReacBal}) as
$$\pi(x+a)q(x+a,x)=\pi(x)q(x,x+a),$$
for the rates
$$q(x+a,x)=\sum_{\nu\to\nu\p\in\cR:\nu -\nu\p= a}\la_{\nu \to \nu\p}(x+a),$$
we see that  $\pi$ is reaction vector balanced if and only if the Markov chain  transition rates given by $q(x+a,x)$ are reversible.
If a CRN is detailed balanced (Definition \ref{dete_bal}), many results are known. Detailed balance implies complex balanced, so the stochastic model has product-form stationary distribution of Poisson-type.  However, more is known, by \cite[Lemma 3.1, p.157]{whittle} and \cite{DetJoshi} for reversible reaction networks this is the case if and only if the corresponding stochastic model is Whittle stochastic detailed balanced, which implies its reversibility as a Markov chain. 

%%%%%%%%%%%%%%%%%%%%%%%%%%%%%%%%%%%%%%%%%%%%%%%%%%%%%%%%%%%%%%%%%%%%%%%%%%%%%%%%%%%%%%%%%%
\subsection{Generalized balanced CRNs}\hfill\break
%%%%%%%%%%%%%%%%%%%%%%%%%%%%%%%%%%%%%%%%%%%%%%%%%%%%%%%%%%%ail%%%%%%%%%%%%%%%%%%%%%%%%%%%%%%%
In $\S$ \ref{exa}, Remark \ref{re_cx_bal} and Example \ref{ex_cx_bal} we indicate how to combine complex balanced and autocatalytic CRNs. This context was not considered before hence we review notions of \cite{JoshiCap} to adapt and encapsulate it into our setting.
\begin{definition}\label{tauto_def} Consider a CRN $(\cG,\k)$ with stochastic dynamics on $\Ga$ and $\pi$ a distribution on $\Ga$.
We say $(\cG,\k)$ is generalized balanced for $\pi$ on $\Ga$ if there exists $\{(L_i,R_i)_{i\in A}\}$ a set of tuples of subsets of $\cR$ with
$$\dot\bigcup_{i\in A}L_i=\dot\bigcup_{i\in A}R_i=\cR $$ such that for all $i\in A$ and all $x\in \Ga$ we have
\begin{equation}\label{gen_cxbal}
  \sum_{\nu\to\nu\p\in L_i} \pi(x+\nu-\nu\p)\la_{\nu \to \nu\p}(x+\nu -\nu\p)=\pi(x)\sum_{\nu\to\nu\p R_i}\la_{\nu \to \nu\p}(x).
\end{equation}
\end{definition}
\begin{remark} The notion of generalized balanced covers 
\begin{enumerate}
\item reaction balanced with index given by reactions, i.e. the tuples of subsets are $\{(\nu\to\nu\p,\nu\p\to\nu)_{\nu\to\nu\p\in\cR}\}$
\item complex balanced with index given by complexes, i.e. the tuples of subsets are defined for $C\in\cC$\hfill\break $L_C=\{\nu\to\nu\p\in\cR|\nu=C\},R_C=\{\nu\to\nu\p\in\cR|\nu\p=C\}.$
\item reaction vector balanced with index given by $a\in \Z^n$, i.e. the tuples of subsets are defined for $a\in \Z^n$\hfill\break $L_a=\{\nu\to\nu\p\in\cR|\nu-\nu\p=a\},R_a=\{\nu\to\nu\p\in\cR|\nu-\nu\p=-a\}.$
\end{enumerate}
of \cite{JoshiCap}, including combinations and other possibilities (see e.g. Remark \ref{re_cx_bal}).
\end{remark}
%The next Lemma generalizes \cite[Theorem 4.1]{JoshiCap} and follows from the same principle described next, applied to the system of equations defining the Master equation. If we partition an equation like (\ref{master_eq1}) into parts of the form (\ref{gen_cxbal}), and the partitioned equations hold, then the original equation we started with also holds.
%\begin{lemma}\label{tauto_lemma}
%If $(\cG,\k)$ is a CRN with stochastic dynamics on $\Ga$ that is generalized balanced for $\pi$, then $\pi$ is a stationary measure for $(\cG,\k)$ on $\Ga$
%\end{lemma}
The following Proposition generalizes \cite[Theorem 4.3]{JoshiCap} and follows from the same principle applied to the system of equations defining the Master equation.% If we partition an equation like (\ref{master_eq1}) into parts of the form (\ref{gen_cxbal}), and the partitioned equations hold, then the original equation we started with also holds.
\begin{proposition}\label{tauto_lemma}
If $(\cG,\k)$ is a CRN with stochastic dynamics on $\Ga$ that is generalized balanced for $\pi$, then $\pi$ is a stationary distribution for $(\cG,\k)$ on $\Ga$.
\end{proposition}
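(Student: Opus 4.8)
The plan is to show that the generalized balance equations (\ref{gen_cxbal}), when summed over the index set $A$, reproduce the full master equation (\ref{master_eq1}), so that any $\pi$ satisfying the former automatically satisfies the latter and is therefore stationary. The key structural fact I would exploit is that $\{(L_i,R_i)\}_{i\in A}$ partitions $\cR$ twice: $\dot\bigcup_{i\in A}L_i=\cR$ and $\dot\bigcup_{i\in A}R_i=\cR$. This double disjoint-union hypothesis is precisely what guarantees that summing the local balance conditions over $i$ recombines into the global equation without double-counting or omitting any reaction.

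First I would fix $x\in\Ga$ and write down equation (\ref{gen_cxbal}) for each $i\in A$. The left-hand side of (\ref{gen_cxbal}) ranges over $\nu\to\nu\p\in L_i$, while the right-hand side ranges over $\nu\to\nu\p\in R_i$. I would then sum both sides over all $i\in A$. On the left, because $\dot\bigcup_{i\in A}L_i=\cR$, the disjointness ensures each reaction $\nu\to\nu\p\in\cR$ appears in exactly one $L_i$, so
\begin{equation}
\sum_{i\in A}\sum_{\nu\to\nu\p\in L_i}\pi(x+\nu-\nu\p)\la_{\nu\to\nu\p}(x+\nu-\nu\p)
=\sum_{\nu\to\nu\p\in\cR}\pi(x+\nu-\nu\p)\la_{\nu\to\nu\p}(x+\nu-\nu\p).
\end{equation}
Symmetrically, on the right the condition $\dot\bigcup_{i\in A}R_i=\cR$ gives
\begin{equation}
\sum_{i\in A}\pi(x)\sum_{\nu\to\nu\p\in R_i}\la_{\nu\to\nu\p}(x)
=\pi(x)\sum_{\nu\to\nu\p\in\cR}\la_{\nu\to\nu\p}(x).
\end{equation}
Equating these two summed expressions yields exactly the master equation (\ref{master_eq1}) at $x$.

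Since $x\in\Ga$ was arbitrary, $\pi$ solves (\ref{master_eq1}) on all of $\Ga$, which is the defining stationarity condition for the continuous-time Markov chain. I would then invoke the standard fact that a probability distribution satisfying the master equation on an irreducible component is a stationary distribution there (this is the content of the discussion preceding the statement in $\S$\ref{stat}). One technical point to handle with care is the role of the indicator $\1_{x\geq\nu\p}$ hidden inside $\la_{\nu\to\nu\p}(x+\nu-\nu\p)$: terms where the shifted argument leaves $\Z^n_{\geq 0}$ must vanish, but this is automatic because $\la$ is defined to be zero outside its support, so the regrouping above is valid termwise.

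The main obstacle, and the only genuinely substantive step, is verifying that the disjoint-union hypothesis is used correctly rather than merely a covering hypothesis: if the $L_i$ (or $R_i$) merely covered $\cR$ with overlaps, a reaction could be counted multiple times and the recombination would produce weighted multiples of the master-equation terms rather than the equation itself. I expect the proof to be short precisely because the partition structure does all the work — this is why the statement is phrased as following ``from the same principle'' as \cite[Theorem 4.3]{JoshiCap}, the generalization being only that the index set $A$ and the pairing $(L_i,R_i)$ are now arbitrary subject to the two disjoint-union constraints, rather than indexed by complexes or reaction vectors.
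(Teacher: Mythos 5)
Your proposal is correct and follows essentially the same route as the paper: fix $x\in\Ga$, sum the generalized balance equations over $i\in A$, and use the double disjoint-union hypothesis $\dot\bigcup_{i\in A}L_i=\dot\bigcup_{i\in A}R_i=\cR$ to recover the master equation (\ref{master_eq1}) exactly. Your write-up is merely more explicit about the recombination step and the role of the indicators, which the paper leaves implicit.
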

\begin{proof} 
We have to check that the master equation is satisfied for all $x\in \Ga$, so consider a fixed $x\in \Ga$.
By definition we have a decomposition of the reactions of the form $\{(L_i,R_i)_{i\in A}\}$ with $\dot\bigcup_{i\in A}L_i=\dot\bigcup_{i\in A}R_i=\cR $. For a specific $i\in A$ we then have
$$
  \sum_{\nu\to\nu\p\in L_i} \pi(x+\nu-\nu\p)\la_{\nu \to \nu\p}(x+\nu -\nu\p)=\pi(x)\sum_{\nu\to\nu\p R_i}\la_{\nu \to \nu\p}(x).
$$
Since the original master equation \ref{master_eq1} is comprised of these equations we conclude that $\pi$ is a stationary measure for $(\cG,\k)$ on $\Ga$.
\end{proof}

%%%%%%%%%%%%%%%%%%%%%%%%%%%%%%%%%%%%%%%%%%%%%%%%%%%%%%%%%%%%%%%%
\section{Autocatalytic CRNs}\label{model}
%%%%%%%%%%%%%%%%%%%%%%%%%%%%%%%%%%%%%%%%%%%%%%%%%%%%%%%%%%%%%%%%
%%%%%%%%%%%%%%%%%%%%%%%%%%%%%%%%%%%%%%%%%%%%%%%%%%%%%%%%%%%%%%%%%
The class of autocatalytic reaction networks we study is a relatively broad class of mass-preserving non-weakly reversible CRNs of arbitrary deficiency. It is inspired by both the inclusion process \cite{inclusion_proc} and the Misanthrope process \cite{Cocozza-Thivent1985} with nontrivial intersection (also see $\S$ \ref{s.inclusion}). Therefore it naturally generalizes both models studied in CRN literature \cite{Swarm,Bianc,Bianc2,Houch,Saito1,Saito2} and some models of homogeneous and inhomogeneous interacting particle systems on finite lattices \cite{inclusion_proc,liggett1997}.  
%%%%%%%%%%%%%%%%%%%%%%%%%%%%%%%%%%%%%%%%%%%%%%%%%%%%%%%%%%%%%%%%%%%%%%%%%%%%%%%%%%%%%%%
%%%%%%%%%%%%%%%%%%%%%%%%%%%%%%%%%%%%%%%%%%%%%%%%%%%%%%%%%%%%%%%%%%%%%%%%%%%%%%%%%%%%%%
\subsection{Notations}
All reactions in autocatalytic CRNs will have a net consumption of one $S_i$ and a net production of one $S_j$ and will be of the following form
\begin{equation} \label{basic_re}S_i+(m-1)S_j\to mS_j, \end{equation}
where $m\geq1$. 
%%%%%%%%%%%%%%%%%%%%%%%%%%%%%%%%%%%%%%%%%%%%%%%%%%%%%%%%%%%%%%%%%%%%%%%%%%%%%%%%%%%%%%
%%%%%%%%%%%%%%%%%%%%%%%%%%%%%%%%%%%%%%%%%%%%%%%%%%%%%%%%%%%%%%%%%%%%%%%%%%%%%%%%%%%%%%
%We denote the reactions which have a net consumption of one $S_j$ and a net production of one $S_k$ by $$\cR_{j,k}=\cR_{-e_j+e_k}:=\{\nu\to\nu\p\in\cR:\nu\p -\nu= e_k-e_j\}.$$
 %Similarly for $a\in\Z^n$ we set $\cR_{a}:=\{\nu\to\nu\p\in\cR:\nu\p -\nu= a\}$.
 We use the following notation for the reaction rates for such reactions% consuming one $S_i$ and producing one $S_j$
 $$\a^1_{i,j}=\text{rate of the reaction } S_i\to S_j$$
  $$\a^m_{i,j}=\text{rate of the reaction } S_i+(m-1)S_j\to mS_j$$
Summarizing this information with a vector we write \begin{equation}\label{vect_not} \a_{i,j}:=(\a^1_{i,j},\cdots,\a^{n_{i,j}}_{i,j})\end{equation} 
where $n_{i,j}$ is the highest integer $m$ with %$\a^{n_k}_{j,k}$ is the last rate of 
a reaction of the form $S_i+ (m-1)S_j\to mS_j$. \\
Denote the collection of reactions net consuming one $S_i$ and net producing one $S_j$ by \begin{equation}\label{part}\cR_{i,j}=\cR_{-e_i+e_j}:=\{\nu\to\nu\p\in\cR:\nu\p -\nu= e_j-e_i\}.\end{equation}

%Remark that the following two reactions 
%$$S_i\to S_j$$ 
%$$S_i+(m-1)S_j\to mS_j$$ are both in $\cR_{i,j}$.
%%%%%%%%%%%%%%%%%%%%%%%%%%%%%%%%%%%%%%%%%%%%%%%%%%%%%%%%%%%%%%%%%%%%%%%%%%%%%%%%%%%%%%
%%%%%%%%%%%%%%%%%%%%%%%%%%%%%%%%%%%%%%%%%%%%%%%%%%%%%%%%%%%%%%%%%%%%%%%%%%%%%%%%%%%%%%
\subsection{Autocatalytic reaction network }
%%%%%%%%%%%%%%%%%%%%%%%%%%%%%%%%%%%%%%%%%%%%%%%%%%%%%%%%%%%%%%%%%%%%%%%%%%%%%%%%%%%%%%%%%%%%%%%%%%%%%%%%%%%%%%%%%%%%%%%%%%%%%%%%%%%%%%%%%%%%%%%%%%%%%%%%%%%%%%%%%%%%%%%%%%%%
\begin{definition}\label{autocatalyticCRN}
A  CRN is said to be  autocatalytic %for recruitment (or autocatalytic, 
(see Remark \ref{abbr}) denoted by $(\cG^*,\k)$ in what follows,  when $\cG^*=(\cS,\cC,\cR)$ on the species $\cS=\{S_1,\cdots ,S_n\}$ satisfies the following rules:
  \begin{enumerate}
\item All reactions are of the form \eqref{basic_re}.
\item If there is a reaction net consuming one $S_k$ and net producing one $S_l$, then \hfill\break$S_k\to S_l,S_l\to S_k\in \cR$. (mass-exchange in both directions, no absorption)
%\todo{changed:one monomolecular linkage class}
\item There is one monomolecular linkage class.
\item If $S_j\to S_k \in \cR_{j,k}\subset\cR$ and $S_l\to S_k\in\cR_{l,k}\subset \cR$ then the reactions in $\cR_{j,k}$ and $\cR_{l,k}$
contain reactions of the same molecularity
such that there is a $c\in\R_{>0}$ with
$$c\cdot (\a^1_{j,k},\cdots,\a^{n_{j,k}}_{j,k})=(\a^1_{l,k},\cdots,\a^{n_{l,k}}_{l,k}).$$
Set for convenience $n_k:=n_{j,k}=n_{l,k}$,
and denote the normalised rates by
$$(1,\be^2_{k},\cdots,\be^{n_k}_{k}):=\frac{1}{\a^1_{j,k}}(\a^1_{j,k},\a^2_{j,k},\cdots,\a^{n_k}_{j,k}) $$
where $n_k$ is the highest integer with %$\a^{n_k}_{j,k}$ is the last rate of 
a reaction of the form $S_j+ (n_k-1)S_k\to n_kS_k$.%\not\in \cR$.
\item There is a vector $\la\in \R_{>0}^n$, such that 
$\la$ is an stationary distribution for the reversible Markov chain of transition kernel $Q=(\a_{i,j}^1)_{i\ne j\in S}$, that is, 
$$\la_i \a_{i,j}^1=\la_j  \a_{j,i}^1,$$
$\forall i\ne j$ with $S_i\to S_j, S_j\to S_i\in \cR$.

\end{enumerate}
\end{definition}
\begin{remark}\label{irr_comp} All autocatalytic CRNs are mass-preserving, 
 meaning that every reaction $\nu\to \nu\p$ of the CRN satisfies $\sum_{i\in\cS}\nu_i =\sum_{i\in\cS}\nu\p_i .$
Hence the stochastic dynamics are confined to irreducible components of the form $\Ga_N:=\{x\in\Z_{\geq 0}^n| |x|=N\}$; similarly the deterministic ODE dynamics are restricted to corresponding stochiometric compatibility classes. Furthermore (1) of definition \ref{autocatalyticCRN} means that only monomers are exchanged in a particle system interpretation, while (4) \& (5) ascertain that the CRN has a product-form stationary distribution (see proof of Theorem \ref{prod_th}).%Omitting condition (7) leads to irreducible components of products of the same form.
\end{remark}

\begin{remark}\label{abbr}
%For the reader's convenience we abbreviate the term "autocatalytic for recruitment" by "autocatalytic" for the class of CRN we define in Definition \ref{autocatalyticCRN}.  
Note that this expression was already used in different contexts. A definition of autocatalytic CRNs can be found for weakly-reversible CRNs in \cite{Manoj1} where it is utilized in the study of persistence and siphons for such CRNs. Other definitions of autocatalytic reaction and autocatalytic set can be found in numerous references, most of them focusing on the framework of origin of life (see e.g. \cite{autocat_2,autocat_1}), whose examination in this context can be traced back to Kauffmann \cite{KAUFFMAN19861}.  %Omitting condition (7) leads to irreducible components of products of the same form.
\end{remark}

\subsection{Examples of autocatalytic reaction networks \label{s.swarm}}\label{Table1}
Here we introduce the notions and illustrate applications and the model. For the CRNs here on two species, conditions (1)-(4)  of Definition \ref{autocatalyticCRN} are easily seen to be satisfied, and condition (5) is trivial.  For a frameworks of interest for autocatalytic CRNs with more species we refer to $\S$ \ref{s.inclusion}.
%\hfill\break
%\FloatBarrier
\begin{table}[H]
\centering
\begin{tabular}{ |c|c|c|c| }
 \hline Example (A)&Example (B)&Example(C)&Example(D)
 \\ 
 \hline
 \begin{tikzcd} [ row sep=1em,
  column sep=1em]
S_1\arrow[r, shift left=1ex,"\a_{1,2}^1"]  &S_2 \arrow[l,"\a_{2,1}^1"]\\ 
\end{tikzcd}
 &
\begin{tikzcd} [ row sep=1em, column sep=1em]
S_1\arrow[r, shift left=1ex,"\a_{1,2}^1"]  &S_2 \arrow[l,"\a_{2,1}^1"]\\ 
 mS_1+S_2 \arrow[r, "\a_{2,1}^{m+1}"]  &(m+1)S_1
\end{tikzcd}
 & 
$\begin{tikzcd} [ row sep=0.6em,
  column sep=0.6em]
S_1\arrow[r, shift left=1ex,"\a_{1,2}^1"]  &S_2 \arrow[l,"\a_{2,1}^1"]\\ 
  &2S_1 \\ 
S_1+S_2 \arrow[ru, "\a_{2,1}^2"] \arrow[dr,"\a_{1,2}^2"] \\
& 2S_2
\end{tikzcd}
$ 
&
\begin{tikzcd} [ row sep=0.7em, column sep=0.7em]
S_1\arrow[r, shift left=1ex,"\a_{1,2}^1"]  &S_2 \arrow[l,"\a_{2,1}^1"]\\ 
2 S_1+S_2 \arrow[r, "\a_{2,1}^3"]  &3S_1 \\ 
  S_1+2S_2\arrow[r,"\a_{1,2}^3"] &3S_2\\
    &2S_1 \\ 
S_1+S_2 \arrow[ru, "\a_{2,1}^2"] \arrow[dr,"\a_{1,2}^2"] \\
& 2S_2
\end{tikzcd}
\\ 
 \hline
\end{tabular}
\caption {Some autocatalytic CRNs (Definition \ref{autocatalyticCRN})  drawn via reaction graph} \label{table2} 
\end{table}
%\FloatBarrier

All examples are autocatalytic CRNs (Definition \ref{autocatalyticCRN}).
\textbf{Example (A)} is reversible and of deficiency 0 and coincides with motif E of \cite{Saito2}. \textbf{Example (B)} contains asymmetric transitions, is non-weakly reversible with deficiency 1 and corresponds to motif  F of \cite{Saito2}.
\textbf{Example (C)} is non-weakly reversible with deficiency 2 and is a generalized model of \cite{Bianc},\cite{Swarm},\cite{Saito1}, %and also appears as a special case of motif I of \cite{Saito2}.%, 
which also appears as a special case of motif I of \cite{Saito2}.% (also autocatalytic) and corresponds to the following CRN
%  $$\begin{tikzcd} [ row sep=1em,column sep=1em]
%S_1\arrow[r, shift left=1ex,"\a_{1,2}^1"]  &S_2 \arrow[l,"\a_{2,1}^1"]\\ 
% S_1+l S_2 \arrow[r, "\a_{2,1}^{l+1}"]  &(l+1)S_1\\
% S_1+m S_2 \arrow[r, "\a_{1,2}^{m+1}"]  &(m+1)S_2
%\end{tikzcd} $$

We next remark on some applications of autocatalytic reaction networks.
%The investigation of Saito et al. \cite{Saito2}, which systematically listed network motifs on two species ranging from A to N  together with occurrence of small-number effects contains several instances of network motives from Definition \ref{autocatalyticCRN}, see above.
Examples (C) and (D) have found applications in several interdisciplinary fields. Example (C) can model a colony of foraging ants collecting food from two sources \cite{Bianc2}, it was exploited for decision-making processes in a swarm of agents \cite{Swarm} and apart from that corresponds to the Moran model on two competing alleles with bidirectional mutation \cite{popo,Houch}. Example (D) was introduced  as a high-density model for decision-making processes in swarms of agents and ants \cite{Swarm}. Then the trimolecular reactions of Example (D) model the majority rule, where the majority convinces the minority to change its opinion in collective decision making systems (or food source in ants). We provide the stationary distribution in closed form in Theorem \ref{prod_th} for all autocatalytic CRNs, leading to exact known stationary behaviour in all examples above. %it is required that three ants meet at a location to interact for recruitment. When three ants of type $S_1$ and $S_2$ meet, the model assumes interactions based on {\it the majority rule}, the majority convincing the minority to change its food source (or its opinion in collective decision making systems). The autocatalytic CRN is then defined by the following set of reactions:
\section{Product-form stationary distributions for autocatalytic CRNs}\label{exa}
%%%%%%%%%%%%%%%%%%%%%%%%%%%%%%%%%%%%%%%%%%%%%%%%%%%%%%%%%%%%
 %%%%%%%%%%%%%%%%%%%%%%%%%%%%%%%%%%%%%%%%%%%%%%%%%%%%%%%%%%%%%%%

\subsection{A non-standard product-form stationary distribution}

Here we derive product-form stationary distributions for autocatalytic CRNs (see Definition \ref{autocatalyticCRN}). This class of CRNs and Theorem \ref{prod_th} is stimulated both by the inclusion process \cite{inclusion_proc} and the Misanthrope process \cite{Cocozza-Thivent1985} and contains models studied in the CRN literature \cite{Swarm,Bianc,Bianc2,Houch,Saito1,Saito2} and models of homogeneous and inhomogeneous interacting particle systems on finite lattices \cite{inclusion_proc,liggett1997}. For a proof in the Misanthrope case see e.g. \cite{chleb}.

\begin{theorem}\label{prod_th} Let $(\cG^*,\k)$ be an autocatalytic CRN  (see Definition \ref{autocatalyticCRN}).
Then the associated stochastic CRN has its stochastic dynamics confined to irreducible components of the form $\Ga_N:=\{x\in\Z_{\geq 0}^n| |x|=N\}$, 
is reversible and possesses the product-form stationary distribution
\begin{equation}\label{MainFormula}
\pi(x)=Z_\Ga^{-1}\prod_{S_i\in\cS}^{ }f_i(x_i) ,
\end{equation}
with product-form functions
$$f_i(x_i)=\lambda_i^{x_i}p_i(x_i)$$ where
\begin{equation}\label{fi}
p_i(m)=\frac{1}{m!}\prod_{l=1}^m(1+\sum_{k=2}^{n_i}\be_i^k \prod_{r=1}^{k-1}(l-r)).
\end{equation}

\end{theorem}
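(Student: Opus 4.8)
The plan is to establish the three assertions of the theorem in order: confinement to the sets $\Ga_N$, reversibility, and the explicit product form. The first is immediate from mass preservation, as recorded in Remark \ref{irr_comp}: every reaction \eqref{basic_re} leaves $|x|=\sum_i x_i$ invariant, so the reachable set from any $x$ lies in $\Ga_{|x|}$, while conditions (2) and (3) of Definition \ref{autocatalyticCRN} guarantee that these components are irreducible. For the remaining two assertions I would exploit the reaction vector balance framework: since all reactions in $\cR_{i,j}$ share the net change $e_j-e_i$, the decomposition of $\cR$ indexed by the reaction vectors $a=e_j-e_i$ is of the generalized-balanced type, and by the remark following Definition \ref{react_bal}, verifying \eqref{ReacBal} for this decomposition is equivalent to the reversibility of the Markov chain whose lumped rates are $q(x,x-e_i+e_j)=\sum_{\nu\to\nu\p\in\cR_{i,j}}\la_{\nu\to\nu\p}(x)$. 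Thus, once I exhibit $\pi$ satisfying detailed balance for these lumped rates, Proposition \ref{tauto_lemma} delivers stationarity and reversibility simultaneously.

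The key simplification comes from conditions (4) and (5). Condition (4) says the molecularity-$m$ rate factorizes as $\a^m_{i,j}=\a^1_{i,j}\be^m_j$ with $\be^1_j=1$ and $\be^m_j$ depending only on the target species $j$. Writing out the mass-action intensity \eqref{int} for a reaction $S_i+(m-1)S_j\to mS_j$ at state $x$ and summing over $m$, the total $\cR_{i,j}$-rate becomes
$$q(x,x-e_i+e_j)=\a^1_{i,j}\,x_i\,g_j(x_j),\qquad g_j(m):=\sum_{k=1}^{n_j}\be^k_j\frac{m!}{(m-k+1)!},$$
a product of a factor $x_i$ for the consumed species and a factor $g_j(x_j)$ depending only on the target. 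I would then write the detailed balance equation between $x$ and $x\p=x-e_i+e_j$, namely $\pi(x)q(x,x\p)=\pi(x\p)q(x\p,x)$, insert the ansatz $\pi(x)=Z_\Ga^{-1}\prod_i\la_i^{x_i}p_i(x_i)$, and cancel the ratio $\a^1_{i,j}/\a^1_{j,i}=\la_j/\la_i$ using the reversibility of $\la$ from condition (5).

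After the cancellation the $\la$-factors disappear and the equation separates into an identity equating a function of $(i,x_i)$ with a function of $(j,x_j)$:
$$\frac{x_i}{g_i(x_i-1)}\frac{p_i(x_i)}{p_i(x_i-1)}=\frac{(x_j+1)\,p_j(x_j+1)}{g_j(x_j)\,p_j(x_j)}.$$
Both sides must therefore equal a common constant; since condition (3) makes the monomolecular graph connected, this constant is shared across all species and may be normalized to $1$ (it is absorbed into $\la$ or into $Z_\Ga$). This forces the recursion $p_i(m)=\tfrac1m g_i(m-1)\,p_i(m-1)$ with $p_i(0)=1$, whose solution is $p_i(m)=\tfrac1{m!}\prod_{l=1}^m g_i(l-1)$. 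Rewriting $g_i(l-1)=1+\sum_{k=2}^{n_i}\be^k_i\prod_{r=1}^{k-1}(l-r)$ reproduces exactly formula \eqref{fi}.

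The routine parts are the falling-factorial bookkeeping in computing $g_j$ and the algebraic cancellation. The conceptual crux, which I expect to be the main obstacle, is recognizing that conditions (4) and (5) are precisely what make the detailed balance relation separable across species — condition (4) removing the dependence of the autocatalytic rates on the source, and condition (5) balancing the monomolecular backbone — and then checking that the separation constant is globally consistent, which is where the single-linkage-class hypothesis (3) enters.
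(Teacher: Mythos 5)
Your proposal is correct and follows essentially the same route as the paper: both reduce the master equation to reaction vector balance over the sets $\cR_{i,j}$, verify pairwise detailed balance for the lumped rates using conditions (4) and (5) to cancel the $\la$-factors, and use the connectivity of the monomolecular backbone to glue the pairwise solutions into a single consistent family $f_i=\la_i^{x_i}p_i(x_i)$. The only (cosmetic) difference is that you derive $p_i$ by separating variables and fixing a global constant, whereas the paper verifies the ansatz pairwise and then reconciles the pieces by rescaling with the constants $c(i,j)$ from condition (5).
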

%Let us first illustrate the result on a irreversible autocaltytic CRN.
%  \begin{example}
 % We use the notation of $\S$ \ref{model}.
% Consider the following non weakly
 %reversible CRN
 %$$\begin{tikzcd} [ row sep=1em, column sep=1em]
%S_1\arrow[r, shift left=1ex,"\a_{1,2}^1"]  &S_2 \arrow[l,"\a_{2,1}^1"]\\ 
%S_1+S_2 \arrow[r, "\a_{1,2}^2"]  &2S_2
%\end{tikzcd}
%$$
%with positive irreducible components of the form $\Ga_N=\{x\in \Z_{\geq 0}^2| x_1+x_2=N\}$  for $N\geq2$. Then the stationary distribution is of product-form and is reaction vector balanced
%$$\pi(x)=Z_{\Ga_N}^{-1}\prod_{i\in\cS}^{ } f_i(x_i)$$
%where
%$$f_1(m)=\frac{(\a^1_{2,1})^m}{m!}$$
%$$f_2(m)=\frac{1}{m!}\prod_{j=1}^m(\a^1_{1,2}+(j-1)\a^2_{1,2})$$
%\end{example}
%So interestingly autocatalytic CRN of Definition \ref{autocatalyticCRN}(see Remark \ref{abbr}) admit a reversible stationary measure as continuous-time Markov chains although the associated reaction graph can be asymmetric (directed) both in terms of reactions and reaction rates. Remark also that Theorem \ref{prod_th} can be generalized to allow more general reactions $\cR_a$ without losing the property of having product-form or being reversible. 
\begin{proof} %of Theorem \ref{prod_th}
First remark that condition (5) of Definition \ref{autocatalyticCRN} holds
if and only if for each $i,j$ such that $\cR_{i,j}\neq 0$ (of \eqref{part}) there exists  a $c(i,j)>0$ such that 
\begin{equation}\label{cond_pairw} \a_{i,j}^1=\frac{\la_j}{c(i,j)},\a_{j,i}^1=\frac{\la_i}{c(i,j)}.\end{equation}
We show that $\pi$ is reaction vector balanced for any irreducible component $\Ga$ by separating the master equation into parts according to reaction vector balance (\ref{ReacBal}). 
According to conditions (1) and (2) given in Definition \ref{autocatalyticCRN}, we can partition the set of reactions using the various sets 
$\cR_{i,j}$ and $\cR_{j,i}$ (of \eqref{part}), and hence subdivide the master equation according to this partitioning. 
Let $i,j$ be such that $S_i\to S_j\in\cR_{i,j}\subset \cR $. %It is enough to show that $\pi$ .% where $\mid x\mid_1 \geq2$.
\begin{claim}  $\pi$ as defined in (\ref{MainFormula}) satisfies the respective equation (\ref{ReacBal}) associated to $\cR_{i,j}$, for all $x\in \Ga\subset\Z_{\geq 0}^n$.
\end{claim}
\begin{proof}
In the following we omit the coefficients $x_l$ for $l\neq i,j$ in the equation from $\pi$, since the other coordinates are equal and we prove $\pi$ has product-form. We only get reactions $\cR_{i,j}$ on the left side and reactions $\cR_{j,i}$ on the right side of (\ref{ReacBal}): we must thus check that the $f_i$ solve
\begin{equation}\label{react_}\pi(x_i+1,x_j-1)(x_i+1)\big (\sum_{l=1}^{n_j}\a_{i,j}^l\1_{\{x_j\geq l \} }\prod_{k=1}^{l-1}(x_j-k)\big ) \end{equation}
$$=\pi(x_i,x_j)x_j\big ( \sum_{q=1}^{n_i}\a_{j,i}^q\1_{\{x_j\geq 1,x_i\geq q-1 \} }\prod_{m=1}^{q-1}(x_i+1-m)\big )$$
Observe that this equation vanishes on both sides for $(x_i,x_j)=(x_i,0)\in \Z_{\geq 0}\times \{0\}$, and that for all $(x_i,x_j)\in \Z_{\geq 0}\times \Z_{\geq 1}$ we have
$$\1_{\{x_j\geq l \}}\prod_{k=1}^{l-1}(x_j-k)=\prod_{k=1}^{l-1}(x_j-k)$$
$$\1_{\{x_j\geq 1,x_i\geq q-1 \} }\prod_{m=1}^{q-1}(x_i+1-m)=\prod_{m=1}^{q-1}(x_i+1-m)$$
where one can reduce the second identity to the first on the domain we consider. Set (both for $i,j$)
$$g_i(m)=\frac{1}{m!}\prod_{l=1}^m(\sum_{k=1}^{n_i}\a_{j,i}^k\prod_{r=1}^{k-1}(l-r)).$$
Then for $(x_i,x_j)\in \Z_{\geq 0}\times \Z_{\geq 1}$ we get
$$\big (\sum_{l=1}^{n_j}\a_{i,j}^l\1_{\{x_j\geq l \} }\prod_{k=1}^{l-1}(x_j-k)\big )=\frac{x_j\cdot g_j(x_j)}{ g_j(x_j-1)},$$
$$\big ( \sum_{q=1}^{n_i}\a_{j,i}^q\1_{\{x_j\geq 1,x_i\geq q-1 \} }\prod_{m=1}^{q-1}(x_i+1-m)\big )=\frac{(x_i+1)\cdot g_i(x_i+1)}{ g_i(x_i)}.$$

Next inserting $\pi(x_i,x_j)=g_i(x_i)g_j(x_j)$ in (\ref{react_}) we obtain  
$$g_i(x_i+1)g_j(x_j-1)(x_i+1)\frac{x_j\cdot g_j(x_j)}{ g_j(x_j-1)}=g_i(x_i)g_j(x_j)x_j\frac{(x_i+1)\cdot g_i(x_i+1)}{ g_i(x_i)}$$
By shortening fractions this is equivalent to 
$$g_i(x_i+1)\cdot g_j(x_j)\cdot (x_i+1)\cdot x_j=x_j\cdot g_j(x_j)\cdot(x_i+1)\cdot g_i(x_i+1)$$
so this Ansatz solves the equation.
Observe that along equations (\ref{react_}) $x_i+x_j$ is the same on the left and on the right hand side, so any functions 
$$h_i(m)=d^m\cdot g_i(m),h_j(m)=d^m\cdot g_j(m) ,d>0$$
are also solutions to (\ref{react_}). 
However we have to choose the product-form functions compatible taking into account all $i,j$ with $\cR_{i,j}\neq \emptyset$. Hereby we shall show that for all $i,j$ with $\cR_{i,j}\neq \emptyset$ we find a $d(i,j)>0$ such that we arrive at the same product-form functions and that they correspond to $f_i$. \hfill\linebreak 
 For this we use (\ref{cond_pairw}) to set $$d(i,j)=c(i,j)=\frac{\la_j}{\a^1_{i,j}}=\frac{\la_i}{\a^1_{j,i}}.$$
 Then the $g_i(m)$ can be written as
 $$g_i(m)=(\a_{j,i}^1)^m p_i(m)$$
 where the $p_i$ are defined as
$$p_i(m)=\frac{1}{m!}\prod_{l=1}^m(1+\sum_{k=2}^{n_i}\be_i^k \prod_{r=1}^{k-1}(l-r)).$$
With this we write
\begin{equation}\label{change_} g_i(m)\cdot c(i,j)^m=(\frac{\la_i}{\a_{j,i}^1})^m(\a_{j,i}^1)^m p_i(m)=\la_i^m p_i(m):=f_i(m).\end{equation}
as required.  Notice that the $f_i(m)$ as the resulting product-form functions are well-defined and do not depend on specific pairs $i,j$, using both condition (4) and (5) from definition \ref{autocatalyticCRN}.
\end{proof}
%Hence we conclude that this CRN has stationary product-form solution of the form given above.
\end{proof}

\begin{remark}\label{re_cx_bal}
%The proof of Theorem \ref{prod_th} shows that one can assemble CRN along species, where the underlying stationary distributions are of product-form and agree in that species. 
%In the proof we assemble the CRN consisting of only reactions of $\cR_{i,j}\cup \cR_{j,i}$ with $\cR_{i,k}\cup \cR_{k,i}$.
Notice that autocatalytic CRNs considered in Theorem \ref{prod_th} can be combined with complex balanced CRNs to obtain a bigger class of CRNs for which the stationary distribution is known and of product-form. This is thanks to the product-form and Proposition \ref{tauto_lemma}. The incoming reactions in the autocatalytic part which are also part of a complex balanced CRNs are however restricted to be monomolecular.
\end{remark}
We give an example to outline this and indicate the principle.
\begin{example}\label{ex_cx_bal} In this example the CRN is composed of the upper part which is reaction vector balanced and corresponds to reactions between $S_1,S_2$ and the lower part which is complex balanced and corresponds to reactions between $S_1,S_3$.
$$\begin{tikzcd} [ row sep=1em,
  column sep=1em]
S_1\arrow[r, shift left=1ex,"\a_{1,2}^1"]  &S_2 \arrow[l,"\a_{2,1}^1"]\\ 
 S_1+S_2 \arrow[r, "\a_{1,2}^2"]  &2S_2 \\
 2S_1 \arrow[rr, shift left=1ex,"\k_1"] 
%\arrow[dr,  shift left=1ex,"\la_3"] 
&& 2S_3 \arrow[dl, "\k_2"] \\
& S_1+S_3\arrow[ul,"\k_3"]
\end{tikzcd}
$$

The stationary distribution is
$$\pi(x_1,x_2,x_3)=\frac{(\a_{2,1}^1)^{x_1}}{x_1!}\frac{\prod_{j=1}^{x_2}(\a^1_{1,2}+(j-1)\a^2_{1,2})}{x_2!}\frac{(c_3\p)^{x_3}}{x_3!}$$
on irreducible components of the form $$\Ga_N=\{x\in\Z_{\geq 0}^3| \sum_{i=1}^3x_i=N \}$$
with $c_3\p=c_3\cdot \frac{\a_{2,1}^1}{c_1}$, where $(c_1,c_3)$ is a point of complex balance of the lower CRN (i.e. complex balanced for the CRN that consist only of reactions between $S_1,S_3$). 
Since the balance equation for the upper CRN are reaction vector balanced, while the lower are complex balanced, the CRN is overall generalized balanced on $\Ga_N,N\geq 2$(see Definition \ref{tauto_def}).

% $$\pi(x_1,x_2)=\frac{(\a_{2,1}^1)^{x_1}}{x_1!}\frac{1}{x_2!}\prod_{j=1}^{x_2}(\a^1_{1,2}+(j-1)(j-2)\a^3_{1,2})$$
% is a stationary measure.
\end{example}

\subsection{Asymptotic behaviour of product-form functions of Theorem \ref{MainFormula}\label{quant_ana}}
%%%%%%%%%%%%%%%%%%%%%%%%%%%%%%%%%%%%%%%%%%%%%%%%%%%%%%%%%%%%%%%%%%%%%%%%%%%%%%%%%%%%%%%%%%%%%%%%%%%%%%%%%%%%%%%
%%%%%%%%%%%%%%%%%%%%%%%%%%%%%%%%%%%%%%%%%%%%%%%%%%%%%%%%%%%%%%%%%%%%%%%%%%%%%%%%%%%%%%%%%%%%%%%%%%%%%%%%%%%%%%%
The product-form functions which appear in Theorem \ref{MainFormula} are of the form
$$f_i(m)=\frac{\la_i^m}{m!}\prod_{l=1}^m(1+\sum_{k=2}^{n_i}\be_i^k \prod_{r=1}^{k-1}(l-r)).$$
%In the following we will omit the dependence 
Here we use of the following notations ( for 1-/2-/3-molecular incoming reaction):
\begin{enumerate}
\item $g(m)=\frac{(\la_i)^m}{m!}\prod_{l=1}^m(1+0)=\frac{(\la_i)^m}{m!}$
\item $h(m)=\frac{(\la_i)^m}{m!}\prod_{l=1}^m(1+\be_i^2(l-1))$
\item $p(m)=\frac{(\la_i)^m}{m!}\prod_{l=1}^m(1+\be_i^2(l-1)+\be_i^3(l-1)(l-2))$
\end{enumerate}

%\begin{remark}
%We can rewrite 
%$$h(m)=\frac{(\la_{i})^m}{m!}\prod_{l=1}^m(1+\be_i^2(l-1))=\frac{(\la_{i}\be_i^2)^m}{m!}\prod_{l=1}^m(\frac{1}{\be_i^2}+(l-1))=\frac{(\la_i\be_i^2)^m}{m!}\frac{\Ga(\frac{1}{\be_i^2}+m)}{ \Ga (\frac{1}{\be_i^2} )}$$
%to recover the connection to product-form functions of the IPS literature on  inclusion process,  see Corollary \ref{Corollarycondensation} or \cite{inclusion_proc}.
%\end{remark}
%%%%%%%%%%%%%%%%%%%%%%%%%%%%%%%%%%%%%%%%%%%%%%%%%%%%%%%%%%%%%%%%%%%%%%%%%%%%%%%%%%%%%%%%%%%%%%%%%%%%%%%%%%%%%%%
%%%%%%%%%%%%%%%%%%%%%%%%%%%%%%%%%%%%%%%%%%%%%%%%%%%%%%%%%%%%%%%%%%%%%%%%%%%%%%%%%%%%%%%%%%%%%%%%%%%%%%%%%%%%%%%
%\subsection{Technical results on product-form functions}
%%%%%%%%%%%%%%%%%%%%%%%%%%%%%%%%%%%%%%%%%%%%%%%%%%%%%%%%%%%%%%%%%%%%%%%%%%%%%%%%%%%%%%%%%%%%%%%%%%%%%%%%%%%%%%%
%%%%%%%%%%%%%%%%%%%%%%%%%%%%%%%%%%%%%%%%%%%%%%%%%%%%%%%%%%%%%%%%%%%%%%%%%%%%%%%%%%%%%%%%%%%%%%%%%%%%%%%%%%%%%%%
%Next 
We study asymptotic growth behavior and the problem of normalizability of the different product-form functions. These considerations will be related to condensation in $\S$ \ref{conde_}. Identifying the product-form functions with sequences, the latter is equivalent to existence of finite positive radius of convergence of the associated power series. We say a sequence $(a_n)_n \in \R_{\geq 0}^\N$ is normalizable if there is $c>0$ such that $\sum_{n=0}^\infty a_nc^n<\infty$. We omit the proof of the following lemma which is standard. 

\begin{lemma} (asymptotic growth behavior)\label{growth_lemma}
\begin{enumerate}
\item $\lim\limits_{n \to \infty}\frac{g(n+1)}{g(n)}=0$.
\item $\lim\limits_{n \to \infty}\frac{h(n+1)}{h(n)}=\la_i\be_i^2$.
\item $\frac{p(n+1)}{p(n)}\to \infty$.
\item The same quotient of product-form functions coming from molecularity higher than 3 also diverges.
\end{enumerate}
In particular the limits of (1), (3) and (4) do not depend on the parameter $\la_i$.
\end{lemma}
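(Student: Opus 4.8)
The plan is to compute each limit directly from the closed form of the product-form functions, since the ratio $f_i(m+1)/f_i(m)$ telescopes nicely. Writing
$$
\frac{f_i(m+1)}{f_i(m)}=\frac{\la_i}{m+1}\Big(1+\sum_{k=2}^{n_i}\be_i^k\prod_{r=1}^{k-1}(m+1-r)\Big),
$$
the whole computation reduces to understanding the polynomial in $m$ inside the parentheses, call it $P(m)=1+\sum_{k=2}^{n_i}\be_i^k\prod_{r=1}^{k-1}(m+1-r)$, divided by $m+1$. First I would treat the monomolecular case (1): here $n_i=1$, so $P(m)\equiv 1$ and the ratio is $\la_i/(m+1)\to 0$, giving the claim with no dependence on $\la_i$ beyond the trivial one.

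Next I would handle the bimolecular case (2), where $n_i=2$ and $P(m)=1+\be_i^2 m$, so the ratio equals $\la_i(1+\be_i^2 m)/(m+1)$, whose limit is $\la_i\be_i^2$ by comparing leading coefficients; this is the one case where the limit genuinely retains $\la_i$, which is why the final sentence of the lemma excludes (2). For the threemolecular case (3) and the higher cases (4), $P(m)$ is a polynomial of degree $k-1\geq 2$ whose leading term comes from the top molecularity $k=n_i\geq 3$, namely $\be_i^{n_i}m^{n_i-1}+O(m^{n_i-2})$. Dividing by $m+1$ leaves a polynomial of degree $n_i-2\geq 1$ in $m$, so the ratio grows without bound; multiplying by the constant $\la_i>0$ does not affect divergence, establishing both that the quotient tends to $\infty$ and that, for divergence, the value of $\la_i$ is irrelevant. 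This simultaneously proves (3) and (4).

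I do not expect any real obstacle here, which is presumably why the excerpt calls the lemma standard and omits the proof. The only point requiring minor care is verifying that the leading coefficient of $P(m)$ is exactly $\be_i^{n_i}$ and hence strictly positive: the products $\prod_{r=1}^{k-1}(m+1-r)$ are monic of degree $k-1$, the coefficients $\be_i^k$ are nonnegative with $\be_i^{n_i}$ assumed positive by the definition of $n_i$ as the top molecularity, so no cancellation of the top-degree term can occur. Given that, each of the four assertions, together with the parameter-independence statement, follows from reading off leading coefficients in a ratio of polynomials, so the argument is genuinely routine.
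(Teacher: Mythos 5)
Your proof is correct and is precisely the routine telescoping computation the paper has in mind when it declares the lemma standard and omits the proof: the ratio $f_i(m+1)/f_i(m)=\frac{\la_i}{m+1}\bigl(1+\sum_{k=2}^{n_i}\be_i^k\prod_{r=1}^{k-1}(m+1-r)\bigr)$ reduces everything to comparing the degree of the bracketed polynomial (with positive leading coefficient $\be_i^{n_i}$) against the single factor $m+1$. Your remark on why the top-degree coefficient cannot cancel, and your observation that only case (2) retains a dependence on $\la_i$, are exactly the points one would want checked.
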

%\begin{proof} The proofs are simple, we do item (2) for convenience. 
%$$\lim\limits_{n \to \infty}\frac{h(n+1)}{h(n)}=\lim\limits_{n \to \infty}\frac{\frac{(\la_i)^{n+1}}{(n+1)!}\prod_{l=1}^{n+1}(1+\be_i^2(l-1))}{\frac{(\la_i)^n}{n!}\prod_{l=1}^n(1+\be_i^2(l-1))}=\lim\limits_{n \to \infty}\frac{\la_i(1+\be_i^2m)}{m+1}=\la_i\be_i^2$$
%\end{proof}
Via ratio test we get that only the power series of the functions $h(n)$ have finite positive radius of convergence $(\la_i\be_i^2)^{-1}$ of the associated power series.  $g(n)$ has infinite convergence radius and $p(n)$ has a convergence radius of zero. 
\begin{lemma} (Normalizability of product-form functions)\label{norm_lemma}
\begin{enumerate}
\item $\sum_{m=0}^{\infty}\phi^mg(m)=e^{\phi\la_i}<\infty$ for all $\phi\in \R$.
\item $\sum_{m=0}^{\infty}\phi^mh(m)=\sum_{m=0}^{\infty}\phi^m\frac{(\la_i\be_i^2)^m}{m!}\frac{\Ga(\frac{1}{\be_i^2}+m)}{\Ga(\frac{1}{\be_i^2})}=(1-\la_i\be_i^2\phi)^{-\frac{1}{\be_i^2}}<\infty$ \\
for $0<\phi<(\la_i\be_i^2)^{-1}$
\item $(\sum_{m=0}^{n}\phi^m p(m))_n$ does not converge for any $0<\phi$
\item product-form functions coming from molecularity higher than 3 are also not normalizable as in 3.
\end{enumerate}
\end{lemma}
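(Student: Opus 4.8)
The plan is to handle the four functions one at a time; only part (2) involves a genuine computation, since parts (1), (3) and (4) follow almost immediately from the growth ratios already recorded in Lemma \ref{growth_lemma}. For part (1), I would simply observe that $g(m)=\la_i^m/m!$, so that $\sum_{m=0}^\infty\phi^m g(m)=\sum_{m=0}^\infty(\phi\la_i)^m/m!$ is the defining Taylor series of the exponential; it converges absolutely for every real $\phi$ and sums to $e^{\phi\la_i}$.

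The heart of the argument is part (2). First I would rewrite the product defining $h$ as a rising factorial: since $1+\be_i^2(l-1)=\be_i^2(\tfrac{1}{\be_i^2}+l-1)$ and $\be_i^2>0$ makes every factor positive, one has
\[
\prod_{l=1}^m\left(1+\be_i^2(l-1)\right)=(\be_i^2)^m\prod_{l=1}^m\left(\tfrac{1}{\be_i^2}+l-1\right)=(\be_i^2)^m\,\frac{\Ga(\tfrac{1}{\be_i^2}+m)}{\Ga(\tfrac{1}{\be_i^2})}.
\]
This yields the middle expression $h(m)=\frac{(\la_i\be_i^2)^m}{m!}\frac{\Ga(1/\be_i^2+m)}{\Ga(1/\be_i^2)}$ claimed in the statement. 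I would then recognise $\sum_{m\geq 0}\phi^m h(m)$ as a generalised binomial (negative-binomial) series: with $a=1/\be_i^2$ and $x=\la_i\be_i^2\phi$, the identity $(1-x)^{-a}=\sum_{m\geq 0}\frac{\Ga(a+m)}{\Ga(a)\,m!}\,x^m$, valid for $|x|<1$, gives $(1-\la_i\be_i^2\phi)^{-1/\be_i^2}$, convergent precisely for $0<\phi<(\la_i\be_i^2)^{-1}$.

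For parts (3) and (4), I would invoke the term test: a necessary condition for a nonnegative series $\sum_n a_n$ to converge is $a_n\to 0$. Setting $a_n=\phi^n p(n)$, the growth ratio gives $a_{n+1}/a_n=\phi\,p(n+1)/p(n)\to\infty$ by Lemma \ref{growth_lemma}(3), so for every fixed $\phi>0$ this ratio exceeds $1$ from some index onward; hence $(a_n)$ is eventually increasing and bounded away from $0$, and the series cannot converge. The identical argument, now using Lemma \ref{growth_lemma}(4), disposes of part (4) for all molecularities above three.

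The only genuinely delicate step is the binomial-series identification in part (2). The main things to watch are that the Pochhammer rewriting above is legitimate (it is, because $\be_i^2>0$ keeps every factor $\tfrac{1}{\be_i^2}+l-1$ strictly positive, so no sign ambiguity or vanishing factor arises) and that the convergence condition $|x|<1$ for the generalised binomial expansion translates exactly into the stated range $0<\phi<(\la_i\be_i^2)^{-1}$. Everything else is bookkeeping built on the ratio estimates already established.
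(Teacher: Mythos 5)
Your proof is correct and follows essentially the same route the paper intends: the paper omits the proof as standard, pointing to the ratio test applied to the growth ratios of Lemma \ref{growth_lemma} together with the standard exponential and negative-binomial series identities, which is exactly what you supply (the Pochhammer/Gamma rewriting in your part (2) is already the middle expression displayed in the lemma statement). Your term-test argument for parts (3) and (4) is just the divergence half of the ratio test the paper invokes, so there is no substantive difference.
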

%\begin{proof}
%The proofs are simple, for completeness we give it here.
%\begin{enumerate}
%\item $\sum_{m=0}^{\infty}\phi^mg(m)=e^{\phi\la_i}<\infty$ by definition
%\item $\sum_{m=0}^{\infty}\phi^mh(m)=\phi^m\frac{(\la_i\be_i^2)^m}{m!}\frac{\Ga(\frac{1}{\be_i^2}+m)}{\Ga(\frac{1}{\be_i^2})}=(1-\la_i\be_i^2\phi)^{-\be_i^2}<\infty$ \\
%for $0<\phi<(\la_i\be_i^2)^{-1}$ by a series expansion of $(1-x)^{-a},|x|<1,a\in \R_{>0}$
%\item We have $$(\la_i\be_i^3\phi)^m \frac{(m!)^2}{m!}\sim i(m)\phi^m$$
%so it diverges. By the term test for series we conclude that this series does not converge.
%\item Same proof as in (3)
%\end{enumerate}
%\end{proof}
We have three different behaviors with respect to normalizability, $g(m)\phi^m$ can be normalized for any $0<\phi$, $h(m)\phi^m$ can only be normalized up to $0<\phi<0(\la_i\be_i^2)^{-1}$ whereas $p(m)\phi^m$ can not be normalized independent of the value $0<\phi$. %For mass-preserving CRN, the number of molecules of any element of the irreducible component is the same. Because of this one can rescale all the product-form functions of the stationary distribution by the same parameter $\phi$ if all of them are normalizable.
%\begin{remark}\label{fugacity}
%For stochastic particle systems this parameter $\phi$ is called fugacity.  %For value $(\la_i\be_i^2)^{-1}$ is then called the critical density and is related to condensation, see $\S$ \ref{sect_cond} or \cite{inclusion_proc,monotonic_1,Ferrari2007}. In particular on a irreducible component $\Ga=\{x\in \Z^\cS_{\geq 0}| \sum_i x_i =N\}$ we have a family of product-form stationary distributions
%$$\pi_\phi(x)=Z^{-1}\prod_{S_i\in\cS}f_i(x_i)\phi^{x_i}$$
%and since we normalize with $Z$ it is easy to see that all of these define the same stationary distribution.
%\end{remark}
Due to the conservative nature of mass-preserving CRNs or conservative interacting particle systems (IPS), rescaling all the product-form functions by the same $\phi$ does not change the distribution (for stochastic particle systems this parameter $\phi$ is called fugacity \cite{inclusion_proc,monotonic_1}).

%%%%%%%%%%%%%%%%%%%%%%%%%%%%%%%%%%%%%%%%%%%%%%%%%%%%%%%%%%%%%%%%%%%%%%%%%%%%%%%%%%%%%%%%%%%%%%%%5
%%%%%%%%%%%%%%%%%%%%%%%%%%%%%%%%%%%%%%%%%%%%%%%%%%%%%%%%%%%%%%%%%%%%%%%%%%%%%%%%%%%%%%%%%%%%%%%%5
\subsection{The classical mean field scaling\label{s.classicalscaling}}\hfill\break
%%%%%%%%%%%%%%%%%%%%%%%%%%%%%%%%%%%%%%%%%%%%%%%%%%%%%%%%%%%%%%%%%%%%%%%%%%%%%%%%%%%%%%%%%%%%%%%%5
%%%%%%%%%%%%%%%%%%%%%%%%%%%%%%%%%%%%%%%%%%%%%%%%%%%%%%%%%%%%%%%%%%%%%%%%%%%%%%%%%%%%%%%%%%%%%%%%5
Denote by $|\nu|=\sum_{S_i\in \cS}\nu_i$ the number of molecules involved in a reaction, and designate by $V$ the scaling parameter usually taken to be the volume times Avogadro's number. Then in some situations it is reasonable to rescale the transition rates of the stochastic model according to the volume as
\begin{equation} \label{rate_resc}
\la_{\nu \to \nu\p}^V(x)=\frac{V\k_{\nu\to\nu\p}}{V^{|\nu|}}\frac{(x)!}{(x-\nu)!}\1_{x\geq \nu},
\end{equation}
corresponding to the following change of the reaction rate 
$$\k_{\nu\to\nu\p}\to \tilde{\k}_{\nu\to\nu\p}=\frac{V\k_{\nu\to\nu\p}}{V^{|\nu|}}.$$
This way of rescaling the transition rates is adopted by considering the probability that a set of
$\vert\nu\vert$ molecules meet in a small volume element to react \cite{anderson2,kurtz2}. The above mean field scaling assumes that a particular  molecule of type $S_i$ will meet a molecule of type $S_j$ with a probability proportional to the concentration of type $S_j$ molecules. 
 Kurtz \cite{kurtz2} linked the short term behavior of the properly scaled continuous-time Markov chain to the dynamics of the ODE model. 
 %Many results in the same direction as \cite{kurtz2} were developed in order to reduce or approximate the stochastic model, see for example \cite{eckmann,kang,schnoerr}. 
Within the classical scaling regime, Theorem \ref{prod_th} becomes
\begin{theorem}\label{prod_th_ad} Let $(\cG^*,\k)$ be  autocatalytic  (see Definition \ref{autocatalyticCRN}).
Then the associated stochastic CRN, with rate function as in \ref{rate_resc} possess the product-form stationary distribution
\begin{equation}\label{MainFormula_2}
\pi(x)=Z_\Ga^{-1}\prod_{i\in\cS}^{ }f_i(x_i),
\end{equation}
with product-form functions
$$f_i(m)=\lambda_i^{m}\frac{1}{m!}\prod_{l=1}^m(1+\sum_{k=2}^{n_i}\frac{\be_i^k}{V^{k-1}} \prod_{r=1}^{k-1}(l-r))$$ 
with the stochastic dynamics confined to irreducible components $\Ga$ as specified in Remark \ref{irr_comp}
\end{theorem}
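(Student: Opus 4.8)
The plan is to reduce Theorem~\ref{prod_th_ad} to the already-proved Theorem~\ref{prod_th} by observing that the volume rescaling~\eqref{rate_resc} of the transition rates is, for the autocatalytic class, equivalent to a rescaling of the reaction-rate constants $\k_{\nu\to\nu\p}$ that preserves all the structural hypotheses (1)--(5) of Definition~\ref{autocatalyticCRN}. Concretely, a reaction of the form~\eqref{basic_re}, namely $S_i+(m-1)S_j\to mS_j$, has molecularity $|\nu|=m$, so under~\eqref{rate_resc} its rate constant transforms as $\a^m_{i,j}\mapsto \tilde\a^m_{i,j}=V^{1-m}\a^m_{i,j}$. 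First I would record this explicitly for each reaction in the network, noting in particular that monomolecular reactions ($m=1$) are left unchanged, $\tilde\a^1_{i,j}=\a^1_{i,j}$.

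Next I would verify that the rescaled CRN $(\cG^*,\tilde\k)$ is again autocatalytic. Conditions (1)--(3) depend only on the \emph{reaction graph} (which reactions are present and their reactants/products), and rescaling rate constants by positive factors changes none of these, so (1)--(3) are immediate. For condition (4), the proportionality $c\cdot(\a^1_{j,k},\dots,\a^{n_{j,k}}_{j,k})=(\a^1_{l,k},\dots,\a^{n_{l,k}}_{l,k})$ is preserved because every entry of index $m$ is multiplied by the \emph{same} factor $V^{1-m}$ on both sides, so the same constant $c$ works for the rescaled rates; the normalised rates simply become $\tilde\be^k_k=\be^k_k/V^{k-1}$. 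For condition (5), the vector $\la$ is defined through the monomolecular rates $\a^1_{i,j}$ alone, which are unchanged, so the identical $\la$ witnesses condition (5) for $(\cG^*,\tilde\k)$.

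Having established that $(\cG^*,\tilde\k)$ is autocatalytic, I would invoke Theorem~\ref{prod_th} directly: the rescaled system is reversible, confined to the components $\Ga_N$ of Remark~\ref{irr_comp}, and carries the product-form stationary distribution~\eqref{MainFormula} with functions $f_i(m)=\la_i^m p_i(m)$ built from the \emph{rescaled} normalised rates $\tilde\be^k_i=\be^k_i/V^{k-1}$. Substituting these into~\eqref{fi} yields precisely
\[
f_i(m)=\la_i^m\frac{1}{m!}\prod_{l=1}^m\Big(1+\sum_{k=2}^{n_i}\frac{\be_i^k}{V^{k-1}}\prod_{r=1}^{k-1}(l-r)\Big),
\]
which is the claimed formula~\eqref{MainFormula_2}, completing the proof.

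The step requiring the most care is the bookkeeping in condition (4): one must check that the single proportionality constant $c$ genuinely survives the rescaling rather than being forced to depend on $k$. This works only because all reactions in a fixed $\cR_{i,j}$ of a given molecularity $m$ receive the identical factor $V^{1-m}$, so the factor cancels in the ratio defining $c$; I would make this cancellation explicit. A secondary point worth a remark is that the scaling constant $\la_i$ in $f_i$ is untouched, consistent with the paper's earlier observation (Lemma~\ref{growth_lemma}) that the qualitative growth behavior is governed by the higher-molecularity coefficients, now damped by powers of $V$.
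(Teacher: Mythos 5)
Your proposal is correct and is essentially the paper's own (implicit) argument: the paper states Theorem~\ref{prod_th_ad} as a direct specialization of Theorem~\ref{prod_th} under the substitution $\k_{\nu\to\nu'}\mapsto V^{1-|\nu|}\k_{\nu\to\nu'}$, which is exactly the reduction you carry out, and your verification that conditions (1)--(5) of Definition~\ref{autocatalyticCRN} survive the rescaling (with $\be_i^k\mapsto\be_i^k/V^{k-1}$ and $\la$ unchanged) is the right bookkeeping. The only slip is notational: you wrote $\tilde\be^k_k=\be^k_k/V^{k-1}$ where the subscript should be the species index, i.e.\ $\tilde\be^k_i=\be^k_i/V^{k-1}$.
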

It is then natural to check the large $V$ behaviour of the stationary distribution given in Theorem \ref{prod_th_ad}. 
Recently, large deviation theory has been developped for some class of strongly endotactic mean field CRNs in \cite{eckmann}, but these results do not 
apply to autocatalytic networks.
%\cite{chan} focussed on such problems for density dependent birth and death processes, using large deviation theory based on results of Wentzell and Freidlin. 
We will consider a non mean field regime in $\S$ \ref{sect_cond} and
 just illustrate the mean field scaling limit  by an application of \cite[Theorem 3.1]{chan} to the following example. %, which applies to the invariant measure, in a simple example. 
Consider the autocatalytic CRN, 
$$\begin{tikzcd} [ row sep=1em, column sep=1em]
S_1\arrow[r, shift left=1ex,"\a_{1,2}^1"]  &S_2 \arrow[l,"\a_{2,1}^1"]\\ 
  2S_1+S_2\arrow[r,"\a_{2,1}^3"] &3S_1.
\end{tikzcd}
$$ 
Let $V=N$ be the total number of molecules, and 
let $X(t)$  be the number of molecules of type $S_2$ at time $t\ge 0$, which is a birth and death process evolving in the set $\{0,\cdots,N\}$. It is easy to see that the conditions for application of \cite[Theorem 3.1]{chan} are satisfied. 

Then the rescaled process $Y_N(t)=X(t)/N$  approaches the dynamics of the mass action ODE $dy/dt =F(y)$, where $F(y)=b(y)-d(y)$, with $b(y)= \a_{1,2}^1 (1-y)$ and $d(y)=\a_{2,1}^1 y +\a_{2,1}^3 y (1-y)^2$, $y\in [0,1]$. 
Focusing on $Y_N$, the stationary distribution given in Theorem \ref{prod_th_ad}
translates into a probability measure $\pi_N$ defined on the unit interval $[0,1]$, which 
satisfies a large deviation priciple for this invariant probability measure \cite[Theorem 3.1]{chan},
%
% . Consider the free energy function
%$$J(y)=\int_0^y \ln(r(u)){\rm d}u,\ \hbox{ where } r(u)=\frac{d(u)}{b(u)}.$$
%Let $J_0$ be the global minimum of $J$ on $[0,1]$, and set $I(y)=J(y)-J_0$. Then the large deviation principle for the invariant probability measure given in \cite{chan} can be recast as follows: for any $\gamma >0$, there is $\varepsilon >0$ such that
%$$\liminf_{N\to\infty}\frac{1}{N} \ln \pi_N((y-\varepsilon,y+\varepsilon))\ge -(I(y)+\gamma),$$
%and
%$$\limsup_{N\to\infty}\frac{1}{N} \ln \pi_N((y-\varepsilon,y+\varepsilon))\le -(I(y)-\gamma).$$
%The
where the stationary distribution concentrates exponentially fast as $N\to\infty$ on the set of minimizers of the free energy function, which are precisely the linearly stable equilibria of the associated deterministic mass action dynamic. 

One can check that for generic constants $\a_{1,2}^1$, $\a_{2,1}^1$ and $\a_{2,1}^3$ the mass action ODE has a single stable equilibrium which is located in the positive orthant; this follows since it is enough to confirm it for $dy/dt=F(y)$ as above (polynomial in one variable). $\S$ \ref{sect_cond}
considers a different scaling regime  for autocatalytic processes in which condensation occurs. %, that is, the steady-state $\pi$ concentrates on the faces of the probability simplex. 
In the above example, the stationary distribution converges to the point mass $\delta_0$ centered at $y=0$, see Theorem \ref{SecondCondensationResult} and Corollary \ref{Corollarycondensation}.

% \begin{remark}[Open CRN]\label{rem_open}
% \todo{check balance cond, references etc}
% As noted in Remark \ref{re_cx_bal} the results of Theorems \ref{prod_th},\ref{prod_th_ad} can be extended with a compatible complex balanced part. However, even under large but fix volume, if more than twomolecular reactions from the autocatalytic CRN part are present together with an open  connected CRN, the stationary measure is not normalizable (see Lemma \ref{norm_lemma}). This means that the stationary measure is no distribution. As an example consider
% $$
% \begin{tikzcd} [ row sep=1em, column sep=1em]
% \emptyset \arrow[r, "\k_1"] & S_1\arrow[r, shift left=1ex,"\a_{1,2}^1"] \arrow[l, shift left=1ex,"\k_2"]  & S_2 \arrow[l,"\a_{2,1}^1"]\\ 
%  S_1+2S_2 \arrow[r, "\a_{1,2}^3"]  &3S_2 \\ 
% \end{tikzcd}
% $$
% with $\frac{\k_1}{\k_2}=\a_{2,1}^1$. Then
% $$\pi(x_1,x_2)=\frac{(\a_{2,1}^1)^{x_1}}{x_1!}\frac{1}{x_2!}\prod_{j=1}^{x_2}(\a^1_{1,2}+(j-1)(j-2)\a^3_{1,2})$$
% is a stationary measure.
% \end{remark}

%%%%%%%%%%%%%%%%%%%%%%%%%%%%%%%%%%%%%%%%%%%%%%%%%%%%%%%%%%%%%%%%%%%%%%%%%%%%%%%%%%%%%%%%%%%%%%%%%%%%%%%%%%%%%%%
%%%%%%%%%%%%%%%%%%%%%%%%%%%%%%%%%%%%%%%%%%%%%%%%%%%%%%%%%%%%%%%%%%%%%%%%%%%%%%%%%%%%%%%%%%%%%%%%%%%%%%%%%%%%%%%
\section{Application: Condensation in particle systems\label{sect_cond}}
%%%%%%%%%%%%%%%%%%%%%%%%%%%%%%%%%%%%%%%%%%%%%%%%%%%%%%%%%%%%%%%%%%%%%%%%%%%%%%%%%%%%%%%%%%%%%%%%%%%%%%%%%%%%%%%
%%%%%%%%%%%%%%%%%%%%%%%%%%%%%%%%%%%%%%%%%%%%%%%%%%%%%%%%%%%%%%%%%%%%%%%%%%%%%%%%%%%%%%%%%%%%%%%%%%%%%%%%%%%%%%%
We investigate the asymptotic behaviour of mass preserving autocatlytic networks when the total number of molecules $N$ is large. When considering large volume limits, CRN theory usually considers the 
classical mean field scaling limit, see $\S$ \ref{s.classicalscaling}. We focus on a different mechanism, that leads to a CRN (or a particle system) where molecules do not move at random in a mean field regime, but are located at the nodes of a graph. Molecules located at some node $i$ (or  of type $S_i$)  can move to nearest neighbour sites $j$. In this modeling framework the rate at which a molecule of type $S_i$ moves to site $j$ (or is converted into a molecule of type $S_j$) will be function of the absolute number of particles of type $S_i$ and $S_j$, so that the rate constant $\kappa_{\nu\to\nu\p}$ will be independent of $N$. 

This will model the autocatalytic effect where the move of a molecule from site $i$ to site $j$ is a consequence of the attraction of molecules of type $j$ on molecules of type $i$. In this setting, a new phenomenon appears: under some conditions, the molecules will concentrate on a subset of the set of species, leading thus to condensation on a subset of the state space. We first illustrate this phenomenon by considering the so-called {\it inclusion process}. We then study condensation by investigating the asymptotic behavior of the product-form stationary distribution $\pi_N$, putting emphasis on the cases of up to molecularity three. We introduce three different forms of condensation and investigate the limiting distributions for autocatalytic CRNs. We observe that monomolecular autocatalytic CRNs (see Definition \ref{autocatalyticCRN} and Remark \ref{abbr}) and complex balanced CRNs do not satisfy any form of condensation. We prove for the up to bimolecular case a weak form of condensation and a weak law of large numbers. In the threemolecular and higher case we show that such systems exhibit the strongest form of condensation. 
%%%%%%%%%%%%%%%%%%%%%%%%%%%%%%%%%%%%%%%%%%%%%%%%%%%%%%%%%%%%%%%%%%%%%%%%%%%%%%%%%%%%%%%%%%
%%%%%%%%%%%%%%%%%%%%%%%%%%%%%%%%%%%%%%%%%%%%%%%%%%%%%%%%%%%%%%%%%%%%%%%%%%%%%%%%%%%%%%%%%
\subsection{Condensation in inclusion processes\label{s.inclusion}}
%%%%%%%%%%%%%%%%%%%%%%%%%%%%%%%%%%%%%%%%%%%%%%%%%%%%%%%%%%%%%%%%%%%%%%%%%%%%%%%%%%%%%%%%%%
%%%%%%%%%%%%%%%%%%%%%%%%%%%%%%%%%%%%%%%%%%%%%%%%%%%%%%%%%%%%%%%%%%%%%%%%%%%%%%%%%%%%%%%%%

The inclusion process, introduced in \cite{Giardina2009,Giardina2010}, is a particle system which is dual to the {\it Brownian Energy Model} where every particle of type $S_i$ can attract particles from type $S_j$ at rate $p_{ji}$, where the $p_{ij}$ are the transition probabilities of a Markov chain. When $p_{ij}=p_{ji}$, one speaks of symmetric inclusion process (SIP). This particle system evolves in 
$\Z_{\geq 0}^{\cS}$, where $\cS$ is the set of species. It is defined as a time-continuous Markov chain of generator $\mathcal{L}$ of the form
$$\mathcal{L}h(x)=\sum_{i\ne j} p_{ij}x_i (\frac{m}{2}+x_j)(h(x+e_j-e_i)-h(x)),$$
where $h$ denotes any function. In the homogeneous case this is a special case of the Misanthrope process on a finite lattice \cite{Cocozza-Thivent1985}. The symmetric inclusion process defines in fact a stochastic reaction network for the set of reactions $R_{ij}$ given by
$$\begin{tikzcd} [ row sep=1em,
  column sep=1em]
S_i\arrow[r, shift left=1ex,"\a_{i,j}^1"]  &S_j \arrow[l,"\a_{j,i}^1"]\\ 
  &2S_i \\ 
S_i+S_j \arrow[ru, "\a_{j,i}^2"] \arrow[dr,"\a_{i,j}^2"] \\
& 2S_j
\end{tikzcd}
$$
with $\a_{i,j}^1 = p_{ij}\frac{m}{2}$ and $\a_{i,j}^2=p_{ij}$.
$R_{ij}$ is thus a multi-species version of example (C) of $\S$
\ref{s.swarm}.
The authors of \cite{inclusion_proc} studied such processes and provided interesting results on asymmetric CRNs. Notice that such CRNs can be autocatalytic when the Markov chain of transition probabilites $p_{ij}$ is reversible and when conditions (4) and (5) of Definition \ref{autocatalyticCRN} are satisfied. Such process are mass-conservative. Let $N$ be the total number of particles, and let $\pi^N$ be the stationary distribution associated with the process restricted to the irreducible component $\Lambda_N =\{x\in\Z_{\geq 0}^{\cS};\ \sum_{i\in \cS} x_i =N\}$.

The authors of \cite{inclusion_proc} provide an interesting one dimensional process, called asymmetric inclusion process (ASIP),
where $p_{i i+1}=p$ and $p_{i i-1}=q$ on the state space
$\cS=\{1,\cdots,n\}$ with factorised stationary distributions as in Theorem \ref{prod_th} with $\la_i = (p/q)^i$ of (5) of Definition \ref{autocatalyticCRN}.
%$\la_i = (p/q)^i$. The model exhibits factorised stationary distributions as in Theorem \ref{prod_th} with One can check that the model is autocatalytic. 

A new interesing phenomenon appears in such process: In the limit $N\to\infty$ and when $p>q$, the process condensates on the right edge, that is $\pi^N(X_n \le (1-\delta)N)\longrightarrow 0$, for all $\delta \in (0,1)$. The authors argued that at first sight one  might be tempted to think that this is just a consequence of the asymetry $p>q$, and proved that this argument is not correct since a CRN having the same coefficient $\a_{i,j}^1$ but vanishing second-order coefficient $\a_{i,j}^2\equiv 0$ would have a Poissonian product-form stationary distribution and no condensation would occur.

Building on this work, the  authors of \cite{Bianchi16} considered a reversible inclusion process which is reversible as a Markov chain with
 $\la_i p_{ij}\equiv \la_j p_{ji}$ (as in (5) of Definition \ref{autocatalyticCRN}),
where the diffusion constant $m_N$ depends on the total number of particles $N$ in such a way that
$m_N \ln(N)\longrightarrow 0$ as $N\to\infty$. They proved that the process condensates on the set of species 
where the stationary distribution $\la$ attains its maximum value.
 We will extend these results to autocatalytic CRNs of arbitrary molarity.

\subsection{Condensation in autocatalytic reaction networks}\label{conde_}
Consider a sequence of random vectors $(X_N)_{N\in\N}$ indexed by $N$, where $X_N=(X_1,\cdots ,X_n)_N$ takes values in
\begin{equation}\label{abs_irr}\Ga_N:=\{x\in\Z^n_{\geq 0}\text{ such that } |x|=N\}.\end{equation}
Denote the corresponding sequence of discrete probability distributions by $\pi_N$, i.e.
\begin{equation}\label{abs_cond}\pi_N(x):=P(X_N=x),x\in\Z^n_{\geq 0}\end{equation}
We use this %slightly more abstract
setting to first make some general observations and statements. Let $[n]:=\{1,\cdots,n\}$ and denote the coordinate-wise maximum and projection of an element $x\in\Z^n_{\geq 0}$ by
$$\M(x):=\max\limits_{i\in [n]}x_i \text{ and } \text{proj}_i(x):=x_i\text{ for }i\in [n]
$$
We allow the following abuse of notation for simplicity, where $q:\R\to\R$ is a function, and write
$$ \pi_{N}(X_j\geq q(N)):=P(\text{proj}_j(X_N)\geq q(N))=\pi_N(\{x\in\Z^n_{\geq 0}|x_j\geq q(N)\})$$

Following \cite{inclusion_proc,monotonic_1,Bianchi16} we introduce three notions for condensation.

\begin{definition}\label{condens_ab} In the setting of (\ref{abs_cond}) we define the following notions of condensation
 \begin{enumerate}[label=(C\arabic{*})]
  \item \label{C1} $\lim\limits_{N \to \infty} \pi_{N}(\M(X_N) = N)=1$
  \item \label{C2} $\lim\limits_{K \to \infty}\lim\limits_{N \to \infty} \pi_{N}(\M(X_N)\geq N-K)=1$
  \item \label{C3} For all $\delta \in (0,1)$ we have 
$\lim\limits_{N \to \infty} \pi_{N}(\M(X_N)\geq \delta N)=1$
 \end{enumerate}
%which can equivalently be formulated as
  %\begin{enumerate}[label=(C\arabic{*})]
  %\item  $\lim\limits_{N \to \infty} \pi_{N}(\{x\in\Ga_N|\max\limits_{i\in [n]}x_i\ = N\})=1$
  %\item  $\lim\limits_{K \to \infty}\lim\limits_{N \to \infty} \pi_{N}(\{x\in\Ga_N|\max\limits_{i\in[n]}x_i\geq N-K\})=1$
  %\item  For all $\delta \in (0,1)$ we have 
%$\lim\limits_{N \to \infty} \pi_{N}(\{x\in\Ga_N|\max\limits_{i\in[n]}x_i\geq \delta N\})=1$
 %\end{enumerate}
\end{definition}
Next, we show (C1) is the strongest and (C3) is the weakest notion given in definition \ref{condens_ab}, the simple proof is omitted.
\begin{lemma}\label{cond_lem_1} We have the implications:
$$(C1)\implies(C2)\implies(C3)$$
\end{lemma}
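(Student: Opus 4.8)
The plan is to establish the two implications $(C1)\implies(C2)$ and $(C2)\implies(C3)$ separately, each by showing that the event appearing in the stronger notion is contained in the event appearing in the weaker one, for every fixed $N$ (and every fixed $K$ where relevant). Since $\pi_N$ is a probability measure, set inclusion $A\subseteq B$ gives $\pi_N(A)\le\pi_N(B)$, and a sequence of probabilities bounded above by $1$ that dominates a sequence tending to $1$ must itself tend to $1$. So the whole argument reduces to elementary monotonicity of measure together with the squeeze of limits between the dominated quantity and $1$.

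For $(C1)\implies(C2)$, I would first observe that the event $\{\M(X_N)=N\}$ is contained in $\{\M(X_N)\ge N-K\}$ for every $K\ge 0$, simply because $N\ge N-K$. Hence $\pi_N(\M(X_N)\ge N-K)\ge\pi_N(\M(X_N)=N)$ for all $N$ and all $K\ge 0$. Taking $N\to\infty$ on both sides and using $(C1)$ gives $\lim_{N\to\infty}\pi_N(\M(X_N)\ge N-K)=1$ for each fixed $K$; the outer limit over $K\to\infty$ of a sequence identically equal to $1$ is then $1$, which is exactly $(C2)$.

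For $(C2)\implies(C3)$, fix $\delta\in(0,1)$. The key step is to compare the threshold $\delta N$ with the threshold $N-K$: for any fixed $K$, once $N$ is large enough that $N-K\le(1-\delta)N$, i.e. $N\ge K/\delta$, we have $\delta N\le N-K$, so $\{\M(X_N)\ge N-K\}\subseteq\{\M(X_N)\ge \delta N\}$ and therefore $\pi_N(\M(X_N)\ge\delta N)\ge\pi_N(\M(X_N)\ge N-K)$ for all sufficiently large $N$. Taking the liminf in $N$ and then the limit in $K$ using $(C2)$ forces $\liminf_{N\to\infty}\pi_N(\M(X_N)\ge\delta N)\ge 1$, and since these are probabilities bounded by $1$, the limit exists and equals $1$, giving $(C3)$.

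The argument is entirely routine, so there is no genuine obstacle; the only point requiring mild care is the order of limits in the second implication. Because $(C2)$ is a double limit (inner over $N$, outer over $K$), one cannot simply substitute a single large $K$; instead the comparison must hold for each fixed $K$ with $N$ eventually large, after which the outer $K$-limit is taken. Keeping the quantifiers in the correct order ($\forall\delta$, $\forall K$, $\exists N_0$, $\forall N\ge N_0$) is the one place where a careless write-up could go wrong, which is presumably why the authors call the proof simple and omit it.
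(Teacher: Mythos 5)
Your argument is correct and is precisely the elementary monotonicity-of-measure proof the paper has in mind when it writes ``the simple proof is omitted'': the event in the stronger notion is contained in the event in the weaker one (eventually in $N$), and probabilities squeezed between a sequence tending to $1$ and the constant $1$ must tend to $1$. The only blemish is the threshold in the second implication: the containment $\{\M(X_N)\ge N-K\}\subseteq\{\M(X_N)\ge \delta N\}$ requires $\delta N\le N-K$, i.e.\ $N\ge K/(1-\delta)$ rather than $N\ge K/\delta$ (and the stated equivalence with $N-K\le(1-\delta)N$ is the wrong inequality), but this is immaterial since all you use is that the containment holds for all sufficiently large $N$, which is true.
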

Notice the following sufficient conditions for forms of condensation of definition \ref{condens_ab}. 
\begin{lemma}\label{cond_rem_1} 
Assume there are $k>0$ different coordinates of $X_N$ which are denoted by the set $B\subseteq [n]$ (with $|B|=k$),  such that one of the following holds 
\begin{enumerate}
  \item[(1)] For all $j\in B$ $\lim\limits_{N \to \infty} \pi_{N}(X_j = N\})=\frac{1}{k}$
  \item[(2)] For all $j\in B$ $\lim\limits_{K \to \infty}\lim\limits_{N \to \infty} \pi_{N}(X_j\geq N-K\})=\frac{1}{k}$
  \item[(3)] For all $j\in B$ for all $\delta$ where $1 > \delta \geq \frac{(n-1)}{n}$ we have \\
$\lim\limits_{N \to \infty} \pi_{N}(X_j\geq \delta N\})=\frac{1}{k}$
 \end{enumerate}
 Then if (1) holds this implies (C1), if (2) holds this implies (C2) and if (3) holds this implies (C3). 
  \end{lemma}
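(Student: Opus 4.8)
The plan is to exploit the single linear constraint $|x|=N$ defining $\Ga_N$: once a threshold exceeds $N/2$, no two coordinates can both exceed it, so the events ``coordinate $i$ is large'' become pairwise disjoint, and a lower bound on $\M(X_N)$ splits into a sum of marginals over $B$, each of which converges to $1/k$ by hypothesis.

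First I would record the elementary disjointness fact, which is the engine of all three implications. For a threshold $t>N/2$, if $X_i\ge t$ and $X_{i'}\ge t$ with $i\ne i'$ then $|X_N|\ge 2t>N$, impossible on $\Ga_N$; hence the events $\{X_i\ge t\}_{i\in[n]}$ are pairwise disjoint, and since $\{\M(X_N)\ge t\}=\bigcup_{i\in[n]}\{X_i\ge t\}\supseteq\bigsqcup_{j\in B}\{X_j\ge t\}$,
$$\pi_N(\M(X_N)\ge t)\ \ge\ \sum_{j\in B}\pi_N(X_j\ge t).$$
Every case then follows by choosing $t$ suitably, passing to the limit so the right-hand side tends to $k\cdot(1/k)=1$, and using that a probability never exceeds $1$ to upgrade the lower bound to an exact limit.

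For (C1) I would take $t=N$, where disjointness is automatic since two coordinates cannot both equal $N$ when $N\ge1$ (and $\{X_j\ge N\}=\{X_j=N\}$ on $\Ga_N$); hypothesis (1) then sends the sum to $1$, giving $\lim_N\pi_N(\M(X_N)=N)=1$. For (C2) I would take $t=N-K$: for fixed $K$ and every $N>2K$ one has $t>N/2$, so the displayed bound holds, and letting $N\to\infty$ first and then $K\to\infty$ turns hypothesis (2) into the iterated limit $1$. For (C3) I would first note that $\pi_N(\M(X_N)\ge\delta N)$ is nonincreasing in $\delta$, so it suffices to obtain the limit $1$ at some $\delta_0\in[\tfrac{n-1}{n},1)$ and then propagate it to all smaller $\delta\in(0,1)$ by monotonicity; for $n\ge2$ such a $\delta_0$ satisfies $\delta_0\ge\tfrac{n-1}{n}\ge\tfrac12$, so $\delta_0 N> N/2$ (the only exceptional pair being $n=2$, $\delta_0=\tfrac12$, which is trivial because $\M(X_N)\ge N/2$ holds deterministically on $\Ga_N$), whence the disjointness bound applies and hypothesis (3) forces the limit to $1$.

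The hard part will be the disjointness threshold $t>N/2$: the union-to-sum step is valid only in this regime, since otherwise two coordinates can simultaneously pass the threshold and the marginals overcount. This is exactly why hypothesis (3) is stated for $\delta\ge\tfrac{n-1}{n}\ge\tfrac12$ rather than for arbitrary $\delta$, and why the small-$\delta$ regime of (C3) must be reached by monotonicity instead of directly. I would also be careful with the order of limits in (C2), running $N\to\infty$ before $K\to\infty$ so that the disjointness regime $N>2K$ is eventually in force for each fixed $K$.
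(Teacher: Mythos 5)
The paper states this lemma without proof (it is offered as a simple observation), and your argument is the natural one it omits: for any threshold $t>N/2$ the events $\{X_j\ge t\}$, $j\in B$, are pairwise disjoint on $\Gamma_N$, so $\pi_N(\M(X_N)\ge t)\ge\sum_{j\in B}\pi_N(X_j\ge t)\to k\cdot\tfrac1k=1$, and the bound is capped at $1$. Your handling of the edge cases ($t=N$ for (C1), the regime $N>2K$ for (C2), and the $n=2$, $\delta=\tfrac12$ case plus downward monotonicity in $\delta$ for (C3)) is correct, so the proposal is complete.
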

 %This follows by monotonicity and inclusion-exclusion, i.e. in all implications we use that for 
 %$\dot\bigcup_{i}^k A_i\subseteq C$, $P(A_i)=1/k$ we have
 %$$P(\dot\cup_{i=1}^k A_i)=\sum_{i=1}^kP(A_i)\leq P(C)=1.$$ 
 \begin{remark}\label{cond_conditioning}
 If a random vector $X=(X_1,\cdots, X_n)$ takes value in $\Z^n_{\geq 0}$, then conditioning on the sum being $N$ gives a sequence of random variables as in \eqref{abs_cond}.
 %-,
 %i.e. we can define discrete probability distributions by $\pi_N(x):=P(X=x|\sum_{i=1}^nX_i=N)$
 \end{remark}
 % We work with the subsets of $\Z^n_{\geq 0}$ that sum to $N$:\\
 %- for $(1)$ $\dot\bigcup_{j\in B}\{x\in\Z^n_{\geq 0}|x_j=N\}\subseteq \{x\in\Z^n_{\geq 0}|\max\limits_{i\in [n]}x_i\ = N\}$\\
 %- \textcolor{violet}{for $(2)$, if $N-K\geq \frac{N(n-1)}{n}$ we have $$\bigcup_{j\in B}\{x\in\Z^n_{\geq 0}|x_j\geq N-K\}=\dot\bigcup_{j\in B}\{x\in\Z^n_{\geq 0}|x_j\geq N-K\}\subseteq \{\max\limits_{S_i\in \cS}X_i\ \geq N-K\}$$
 %- for $(3)$, if $1>\delta \geq \frac{(|\cS|-1)}{|\cS|}$ we have $$\bigcup_{S_j\in \cS}\{X_j\geq \delta N\}=\dot\bigcup_{S_j\in \cS}\{X_j\geq \delta N\}\subseteq \{\max\limits_{S_i\in \cS}X_i\ \geq \delta N\}$$
 %Since the sequences of sets in (C3) are monotone on irreducible components, i.e. for 
 %$\delta,\delta\p \in (0,1),\delta>\delta\p$ we have 
%$\{\max\limits_{S_i\in \cS}X_i\geq \delta N\}\subseteq \{\max\limits_{S_i\in \cS}X_i\geq \delta\p N\}$ we can again use monotonicity of probability measure to deduce condensation as in (C3).
 %\todo{Enough detail? -L.}}

%%%%%%%%%%%%%%%%%%%%%%%%%%%%%%%%%%%%%%%%%%%%%%%%%%%%%%%%%%%%%%%%%%%%%%%%%%%%%%%%%%%%%%%%%%%%%%%%%%%%%%%5
%%%%%%%%%%%%%%%%%%%%%%%%%%%%%%%%%%%%%%%%%%%%%%%%%%%%%%%%%%%%%%%%%%%%%%%%%%%%%%%%%%%%%%%%%%%%%%%%%%%%%%%5

%%%%%%%%%%%%%%%%%%%%%%%%%%%%%%%%%%%%%%%%%%%%%%%%%%%%%%%%%%%%%%%%%%%%%%%%%%%%%%%%%%%%%%%%%%%%%%%%%%%%%%%5
%%%%%%%%%%%%%%%%%%%%%%%%%%%%%%%%%%%%%%%%%%%%%%%%%%%%%%%%%%%%%%%%%%%%%%%%%%%%%%%%%%%%%%%%%%%%%%%%%%%%%%%5

 Both inclusion process on $\Z_{\geq 0}^\cS$(or other conservative IPS) and mass-preserving CRNs (see Remark \ref{irr_comp}) are continuous-time Markov chains with positive recurrent stationary stochastic dynamics confined to finite sets of the form (\ref{abs_irr}),
$$\Ga_N=\{x\in\Z^n_{\geq 0}\text{ such that } |x|=N\},$$
indexed by $N$. We consider this setting as in the beginning of $\S$ \ref{conde_} with $n=|\cS|$. We only treat product-form stationary distributions and assume they are given by a family of (product-form) probability distributions of the form
\begin{equation}\label{ass_prod}\pi_N(x)=\frac{\prod_{i\in\cS}\mu_i^{x_i}w_i(x_i)}{Z_N},\end{equation}
along sets (resp. irreducible components for CRNs) of the form (\ref{abs_irr}) where for simplicity $w_i(0)=1$ and $\mu_i> 0$. For  a fixed mass-preserving CRN $(\cG,\k)$ we denote the stationary distributions on the irreducible component with total molecule number equal to $N$ by $\pi_{\cG,N}(x)$. If we make a more general statement we stick to the notation $\pi_N(x)$. Observe also that the definitions of condensation are independent of product-form assumption of (\ref{ass_prod}).
We first check that for Poisson product-form stationary distributions we have no condensation.
One can reduce the statement to two species using the multinomial theorem, from which it is easy to deduce.
\begin{proposition} \label{no_cond}Let $(\cG^*,\k)$ be autocatalytic with only monomolecular reactions, i.e. such that the stationary distribution consists of product-form functions of Poisson type, denoted $g(m)$ in $\S$ \ref{quant_ana}. Then for $(\cG,\k)$ we have no condensation of the form (C3) (hence in any of the forms given in Definition \ref{condens_ab}).
\end{proposition}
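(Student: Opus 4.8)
The plan is to start from the observation that for a monomolecular autocatalytic CRN every $n_i=1$, so the product-form functions of Theorem \ref{prod_th} collapse to the Poisson shape $g(m)=\la_i^m/m!$ and the stationary distribution on $\Ga_N$ becomes $\pi_N(x)=Z_N^{-1}\prod_{i\in\cS}\la_i^{x_i}/x_i!$. Summing over $\Ga_N$ with the multinomial theorem gives $Z_N=(\sum_i\la_i)^N/N!$, so that $\pi_N$ is exactly the multinomial law $\mathrm{Mult}(N;(p_i)_i)$ with $p_i=\la_i/\sum_j\la_j$. This identification is the crux: once it is in hand, (C3) becomes a purely probabilistic statement about concentration of multinomial coordinates.

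Next I would pass to a union bound. Since $\{\M(X_N)\ge\delta N\}\subseteq\bigcup_{j\in\cS}\{X_j\ge\delta N\}$, we have $\pi_N(\M(X_N)\ge\delta N)\le\sum_{j\in\cS}\pi_N(X_j\ge\delta N)$, so it suffices to control each single-coordinate marginal. This is where the reduction to two species enters: to compute the law of a fixed $X_j$ I would collapse all other species into a single block of total weight $\sum_{i\ne j}\la_i$ and apply the multinomial theorem to the inner sum, obtaining in one line that $X_j$ is $\mathrm{Binomial}(N,p_j)$ with $p_j=\la_j/\sum_i\la_i$.

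Finally, because any reaction $S_i\to S_j$ of a monomolecular autocatalytic CRN has $i\ne j$, we have $n=|\cS|\ge 2$, and with all $\la_i>0$ this forces $p^\ast:=\max_{j}p_j<1$. I would then fix any $\delta\in(p^\ast,1)$ and apply the weak law of large numbers to each binomial marginal (equivalently Chebyshev, using $\mathrm{Var}(X_j)=Np_j(1-p_j)$) to get $\pi_N(X_j\ge\delta N)\to 0$ for every $j$. The union bound then yields $\pi_N(\M(X_N)\ge\delta N)\to 0\ne 1$, so (C3) fails for this $\delta$; by Lemma \ref{cond_lem_1} none of (C1)--(C3) can hold.

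I do not expect a genuine obstacle here; the argument is elementary once the multinomial identification is made. The only point demanding care is the existence of an admissible $\delta$, which is precisely where the hypotheses are used: $n\ge 2$ (there is a real reaction $S_i\to S_j$) together with $\la\in\R_{>0}^n$ guarantees $p^\ast<1$, without which the interval $(p^\ast,1)$ would be empty and condensation could hold trivially. One should also note the union bound is applied in the right direction, since we only need an upper bound on the condensation probability rather than a two-sided estimate.
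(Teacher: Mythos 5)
Your proof is correct and follows essentially the same route the paper indicates: the paper omits the details and only notes that "one can reduce the statement to two species using the multinomial theorem," which is exactly your identification of $\pi_N$ as a multinomial law and the collapse of the remaining species into one block to get binomial marginals, followed by the law of large numbers. Your write-up simply fills in the details the paper leaves to the reader, and the point you flag about needing $p^\ast<1$ (from $n\ge 2$ and $\la\in\R_{>0}^n$) is handled correctly.
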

%\todo{C: Ok for you in the form given?(in lit see e.g. p.7 \cite{inclusion_proc})}
%\begin{proof}
%The proof is elementary. We proceed in two steps, we first reduce to the case of two species and then show the statement for two species.
%\todo{Properly write proof}
%\begin{enumerate}
%\item Reduction step:\\
%From the multinomial theorem follows
%$$\sum_{x\in\Z_{\geq 0}^k:|x|=N}\frac{\la_1^{x_1}}{x_1!}\cdots\frac{\la_k^{x_k}}{x_k!}=\frac{(\sum_{i=1}^k\la_i)^N}{N!}.$$
%So $X_1$ has product-form $g_1(m)$ and $\sum_{i=2}^k X_i$ has product-form $\frac{(\sum_{i=2}^k\la_i)^m}{m!}$.
%Using $\pi_{\cG,N}(\max\limits_{S_i\in \cS}X_i\geq \delta N)\leq \pi_{\cG,N}(\max\{X_1,\sum_{i=2}^k X_i\}\geq \delta N)$
%we reduce to two species.
%\item Consider the case of two species with $g_1(m)=\frac{\la_1^m}{m!},g_2(m)=\frac{\la_2^m}{m!}$, w.l.o.g. $\la_1>\la_2$. For $\delta=\frac{1}{1000}$ we have \\
%$\pi_{\cG,N}(\max\limits_{S_i\in \cS}X_i\geq \delta N)\leq \pi_{\cG,N}(X_1\geq \delta N)+ \pi_{\cG,N}(X_2\geq \delta N)\\
%\leq 2\pi_{\cG,N}(X_1\geq \delta N)=2\frac{\sum\limits_{n\geq (1-\delta)N}\frac{\la_1^n\la_2^{N-n}}{n!(N-n)!}}{Z_{\cG,N}}\leq \frac{2}{1000}$ \\
%for $N$ big enough.%\todo{C:Should we simplify proof?}
%\end{enumerate}
%\end{proof}
Mass-preserving complex balanced CRNs have stationary distributions of the same product-form functions, the same result holds.
\begin{proposition} Mass-preserving complex balanced CRNs $(\cG,\k)$ have no weak condensation.
\end{proposition}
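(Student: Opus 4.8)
The plan is to reduce the statement to Proposition \ref{no_cond}, exploiting the fact that complex balanced networks produce exactly the Poisson-type product-form functions already analysed there. By \cite[Theorem 4.1]{anderson2}, a complex balanced CRN $(\cG,\k)$ has stationary distribution $\pi(x)=M_\Ga c^x/x!$ on each irreducible component $\Ga$, with $c\in\R^n_{>0}$ a point of complex balance. For a mass-preserving CRN these irreducible components are precisely the finite sets $\Ga_N=\{x\in\Z^n_{\geq 0}:|x|=N\}$ of \eqref{abs_irr}, so the stationary distribution on $\Ga_N$ is of product-form \eqref{ass_prod} with $\mu_i=c_i$ and $w_i(m)=1/m!$; that is, each product-form function is of the Poisson type $g(m)=c_i^m/m!$ studied in $\S$ \ref{quant_ana}.

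First I would record that, because the product-form functions are Poisson, $\pi_N$ is obtained by conditioning a vector of independent Poisson variables $X_i\sim\text{Poisson}(c_i)$ on the event $\{\sum_i X_i=N\}$ (cf. Remark \ref{cond_conditioning}). This conditioning yields the multinomial law with parameters $N$ and $p_i=c_i/\sum_{j}c_j$, whose marginals are $X_i\sim\text{Binomial}(N,p_i)$. This is exactly the distributional situation treated in the proof of Proposition \ref{no_cond}: that argument uses only the Poisson form of the product-form functions together with the multinomial theorem, and not the autocatalytic structure of the network. Hence the conclusion obtained there, namely the absence of condensation of the form \ref{C3}, carries over verbatim.

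For completeness I would make the concentration explicit. Since $n\geq 2$ and $c\in\R^n_{>0}$, we have $p_{\max}:=\max_i p_i<1$. The weak law of large numbers for the binomial marginals gives $X_i/N\to p_i$ in probability, so for any fixed $\delta$ with $p_{\max}<\delta<1$ a union bound yields
$$\pi_{N}(\M(X_N)\geq \delta N)\leq \sum_{i=1}^n \pi_{N}(X_i\geq \delta N)\longrightarrow 0,$$
which contradicts \ref{C3}. Thus no (C3)-condensation occurs; by Lemma \ref{cond_lem_1} this is the weakest form, so none of \ref{C1}, \ref{C2}, \ref{C3} hold, i.e. there is no weak condensation.

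The step requiring the most care is the reduction itself: one must verify that the point of complex balance indeed produces $p_i<1$ for every $i$ (this fails only in the degenerate single-species case $n=1$, where $\Ga_N$ is a singleton and the question is vacuous), and that the proof of Proposition \ref{no_cond} is genuinely insensitive to whether the Poisson product-form arises from a monomolecular autocatalytic network or from complex balance. Beyond this bookkeeping there is no substantial obstacle, since all the analytic content is already contained in the multinomial concentration underlying Proposition \ref{no_cond}.
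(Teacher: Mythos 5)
Your proof is correct and follows essentially the same route as the paper, which simply observes that by Anderson's theorem the stationary distribution of a complex balanced CRN is built from the same Poisson-type product-form functions $g(m)$ as in Proposition \ref{no_cond}, so the multinomial/binomial concentration argument applies verbatim. The only small imprecision is your claim that the irreducible components of a mass-preserving complex balanced CRN are exactly the sets $\Ga_N$ --- in general they may be proper sublattices of $\Ga_N$ --- but this does not affect the concentration estimate, and the paper glosses over the same point.
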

Next we introduce a generalization of \cite[Theorem 3.1]{inclusion_proc} allowing all product functions of our model. If monomolecular, threemolecular or higher reactions of Theorem \ref{prod_th} are included then product-form functions $q(m)$ can in general not be factorized as $q(m)=\mu^mw(m)$ such that \begin{equation}\label{exist_lim}\lim\limits_{m\to\infty}\frac{w(m+1)}{w(m)}= c\end{equation} (only $h$ as denoted in $\S$ \ref{quant_ana} can be manipulated such that $c=1$), see Lemma \ref{growth_lemma}, and they are not necessarily normalizable anymore, see Lemma \ref{norm_lemma}. 
\begin{remark}
Note that the limit of the quotient \eqref{exist_lim} exists for $w(m)$ if and only if the limit for $q(m)$ exists.
\end{remark}
The conditions (1) \& (2) of Theorem \ref{FirstCondensationResuslt} only require the product-function for coordinate $i^*$ to dominate the others, which gets rid of the assumption of both existence of limit \eqref{exist_lim} and normalizability. %Instead we use weak bounds on the product-form functions of the condensing species compared to itself and the others for the formulation of the result. 
So for a big class of product-form function when paired with asymmetric $\mu_i$ the stochastic dynamics show a weak form of condensation as in definition \ref{condens_ab} (C3). 
\begin{theorem}\label{FirstCondensationResuslt}
Let $$\pi_N(x)=Z_N^{-1}\prod_{S_i\in\cS}^{ } \mu_i^{x_i}w_i(x_i)$$
be a family of probability measures given by product-form functions $w_i$
for $x\in\Z^n_{\geq 0},|x|=N$ and where $Z_N$ is the normalizing constant defined by:
$$Z_N=\sum_{x\in\Z_{\geq 0},|x|=N}\prod_{S_i\in\cS}^{ } \mu_i^{x_i}w_i(x_i)$$
Assume there is a $S_i^*\in\cS$ such that 
\begin{enumerate}
\item $\mu_{i^*}>\mu_j$ when $j\ne i^*$,
\item  For all $S_j\in \cS$ and all $\a>0$ there is $c_{\a,j}\in \R_{>0}$ and a $M_c\in \N$ such that for any $M>M_c$ and all $r\in \{0,\cdots M\}$ we have $$w_j(M-r)w_{i^*}(r)\leq c_{\a,j} e^{\a M}w_{i^*}(M).$$  
\end{enumerate}
Then $\pi_N$ condensates on the subset $S^*=\{S_{i^*}\}$, that is, for all $\delta \in (0,1)$
$$\lim\limits_{N \to \infty} \pi_{N}(X_{i^*}\geq (1-\delta)N)=1,$$
i.e. we have a weak form of condensation as in definition \ref{condens_ab} (C3) and we have a strong law of large numbers $\frac{X_{i^*}}{N}\to 1$ a.s. as $N\to\infty$.
%$$\lim\limits_{N \to \infty} \pi_{\cG,N}(\max\limits_{S_i\in \cS}X_i\geq \d
\end{theorem}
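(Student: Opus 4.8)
The plan is to bound the complementary probability $\pi_N(X_{i^*} < (1-\delta)N)$ and show it decays exponentially in $N$, which delivers both the weak condensation (C3) and, by summability, the strong law of large numbers. Write $\mu := \max_{j\ne i^*}\mu_j$, so that $\mu < \mu_{i^*}$ by hypothesis (1). First I would lower bound the partition function by the single configuration putting all mass on $S_{i^*}$: since $w_j(0)=1$ and $\mu_j^0=1$, the state $x_{i^*}=N,\ x_j=0\ (j\ne i^*)$ contributes exactly $\mu_{i^*}^N w_{i^*}(N)$, hence $Z_N\ge \mu_{i^*}^N w_{i^*}(N)$. It then suffices to bound the numerator $\sum_{|x|=N,\,x_{i^*}\le(1-\delta)N}\prod_i\mu_i^{x_i}w_i(x_i)$ against $\mu_{i^*}^N w_{i^*}(N)$.

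The heart of the argument is to compare an arbitrary configuration with the all-on-$i^*$ configuration by ``moving'' mass onto $S_{i^*}$ one species at a time, using hypothesis (2) as the engine. Fix $\alpha>0$ and a configuration $x$ with $x_{i^*}=r$. Absorbing the other species successively, set $M_0=r$ and $M_k=M_{k-1}+x_{j_k}$, and apply $w_{j_k}(M_k-M_{k-1})w_{i^*}(M_{k-1})\le c_{\alpha,j_k}e^{\alpha M_k}w_{i^*}(M_k)$ at each of the at most $n-1$ steps. Telescoping, and using $M_{n-1}=N$ together with $\sum_k M_k\le (n-1)N$, yields $w_{i^*}(r)\prod_{j\ne i^*}w_j(x_j)\le C_\alpha e^{\alpha(n-1)N}w_{i^*}(N)$ with $C_\alpha=\prod_{j\ne i^*}c_{\alpha,j}$. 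The only technical wrinkle is that hypothesis (2) requires the running mass $M_k$ to exceed the threshold $M_c$; for the finitely many initial steps with $M_k\le M_c$ all $w$-values range over the bounded (and positive) argument set $\{0,\dots,M_c\}$ and are controlled by a single constant, so these steps merely inflate $C_\alpha$. Bounding the remaining weights by $\prod_{j\ne i^*}\mu_j^{x_j}\le\mu^{N-r}$, every configuration with $x_{i^*}=r$ satisfies
\[
\frac{\prod_i\mu_i^{x_i}w_i(x_i)}{\mu_{i^*}^N w_{i^*}(N)}\le C_\alpha\Big(\tfrac{\mu}{\mu_{i^*}}\Big)^{N-r}e^{\alpha(n-1)N}.
\]

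To finish, I would use $N-r\ge\delta N$ on the event $x_{i^*}\le(1-\delta)N$, so the right-hand side is at most $C_\alpha\exp\!\big(N[\delta\ln(\mu/\mu_{i^*})+\alpha(n-1)]\big)$. Since $\ln(\mu/\mu_{i^*})<0$, choosing $\alpha<\delta|\ln(\mu/\mu_{i^*})|/(n-1)$ makes the bracket strictly negative, giving uniform exponential decay of every term. As there are at most $(N+1)^{n-1}$ admissible configurations, summing gives $\pi_N(X_{i^*}\le(1-\delta)N)\le(N+1)^{n-1}C_\alpha e^{-cN}\to 0$ for some $c>0$, which is precisely (C3). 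Because this bound is summable in $N$, the events $\{X_{i^*}/N<1-\delta\}$ occur only finitely often almost surely (Borel--Cantelli, realizing the $X_N$ on a common probability space), and since $X_{i^*}\le N$ always, letting $\delta\downarrow 0$ yields $X_{i^*}/N\to 1$ a.s.

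I expect the main obstacle to be the iterated use of hypothesis (2). The individual factors $w_i$ need not be normalizable or even subexponential (by Lemma \ref{norm_lemma} the three-molecular functions fail to be normalizable), so no fixed-fugacity, term-by-term estimate is available; one must instead compare every mass split against the single dominant configuration while keeping the accumulated error $e^{\alpha(n-1)N}$ tunably small, so that it is overcome by the geometric gain $(\mu/\mu_{i^*})^{\delta N}$ furnished by hypothesis (1). Hypothesis (2) is exactly the structural input that makes this tradeoff possible.
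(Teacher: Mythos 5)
Your proposal is correct and follows essentially the same route as the paper's proof: lower-bounding $Z_N$ by the all-on-$i^*$ configuration, iterating hypothesis (2) across the species to compare each configuration with $\mu_{i^*}^N w_{i^*}(N)$, absorbing the error $e^{\alpha(n-1)N}$ into the geometric gain $(\mu/\mu_{i^*})^{\delta N}$ by choosing $\alpha$ small, counting configurations polynomially, and finishing with Borel--Cantelli. Your explicit telescoping via the running masses $M_k$ and your treatment of the finitely many steps below the threshold $M_c$ are slightly more careful write-ups of steps the paper handles tersely, but the argument is the same.
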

\begin{proof}
Let $\delta\in(0,1)$ be arbitrary, we will show the equivalent statement that probability of the complement to this set goes to zero. 
We want to estimate
$$\pi_N(X_{i^*}\leq(1-\delta)N)=\frac{\sum_{x\in\Z_{\geq 0},|x|=N,x_{i^*}\leq (1-\delta)N}\prod_{S_i\in\cS} \mu_i^{x_i}w_i(x_i)}{Z_N}.$$
We first use the inequality $Z_N\geq  \mu_{i^*}^Nw_{i^*}(N)$ to get
$$\pi_N(X_{i^*}\leq(1-\delta)N)\leq \frac{\sum_{x\in\Z_{\geq 0},|x|=N,x_{i^*}\leq (1-\delta)N}\prod_{S_i\in\cS} \mu_i^{x_i}w_i(x_i)}{\mu_{i^*}^Nw_{i^*}(N)}.$$
We assume that $\mu_1= \max\limits_{S_j\in\cS\setminus S_{i^*}}\mu_j$. We will recursively apply the second hypothesis of Theorem \ref{FirstCondensationResuslt} $|\cS|$ times with a fixed $\a>0$ chosen such that 
\begin{equation} \label{eq_exp}e^{\a|\cS|}<(\frac{\mu_{i^*}}{\mu_1})^\delta.\end{equation}
Notice that for $N-x_{i^*}\geq \delta N$ big enough, if $x\in\Z^\cS_{\geq 0}$ is such that $\sum_{S_i\in \cS\setminus S_{i^*}}x_i=N-x_{i^*}$ then $\max_{S_i\in \cS\setminus S_{i^*}}x_i\geq \frac{N-x_{i^*}}{\mid \cS\mid}$. So we can apply $|\cS|$ times the second hypothesis in the form
$$w_j(x_j)w_{i^*}(r)\leq c_{\a,j} e^{\a (x_j+r)}w_{i^*}(x_j+r)\leq c_{\a,j} e^{\a N}w_{i^*}(x_j+r).$$
We will not write explicit dependence on the constants $c_{\a,j}$ (where $S_j\in \cS$) and just write $c$ for their assembly, since the results derived are asymptotic and hold up to multiplication by constants.
With this we derive the inequality
$$ \prod_{j\in\cS} w_j(x_j) \leq c e^{\a |\cS|N}w_{i^*}(N).$$
Applying this inequality together with 
$$\prod_{j\in\cS\setminus S_{i^*}} \mu_j^{x_j}\leq \mu_1^{N-x_{i^*}}$$
we can estimate 
$$ \prod_{j\in\cS} \mu_j^{x_j}w_j(x_j) \leq c \mu_{i^*}^{x_{i^*}}\mu_1^{N-x_{i^*}} e^{\a |\cS|N}w_{i^*}(N).$$
Then utilizing this inequality at the same time as a rough inequality
$$\mid\{ x\in \Z_{\geq 0}^{\cS\setminus S_{i^*}}\mid |x|_1=N-x_{i^*} \}\mid\leq N^{\mid \cS\mid-1}$$
for the number of integer points on the simplex we get
 $$\pi_N(X_{i^*}\leq(1-\delta)N) \leq c\frac{\sum_{x_{i^*}\leq (1-\delta)N}\mu_{i^*}^{x_{i^*}}\mu_1^{N-x_{i^*}} e^{\a |\cS|N}w_{i^*}(N) N^{|\cS|-1}}{\mu_{i^*}^Nw_{i^*}(N)}$$
$$=c\sum_{x_{i^*}\leq (1-\delta)N}(\frac{\mu_1}{\mu_{i^*}})^{N-x_{i^*}}e^{\a |\cS|N}N^{\mid \cS\mid-1}$$
We exploit that for $x_{i^*}\leq (1-\delta)N$ we have
$$(\frac{\mu_1}{\mu_{i^*}})^{N-x_{i^*}} \leq (\frac{\mu_1}{\mu_{i^*}})^{\delta N}=( (\frac{\mu_1}{\mu_{i^*}})^{\delta} )^N $$
to obtain the following inequality
$$\pi_N(X_{i^*}\leq(1-\delta)N)\leq c\sum_{m\leq (1-\delta)N}( (\frac{\mu_1}{\mu_{i^*}})^{\delta} )^N e^{\a |\cS|N}N^{\mid \cS\mid-1}.$$
Since the terms in the sum do not depend on $m$ we estimate $$|\{0\leq m\leq (1-\delta)N,m\in \Z_{\geq 0}\}|\leq N$$
to upper bound the number of terms in the sum and get
\begin{equation}\label{final}\pi_N(X_{i^*}\leq(1-\delta)N)\leq c ( (\frac{\mu_1}{\mu_{i^*}})^{\delta} )^N e^{\a |\cS|N}N^{\mid \cS\mid}=c ( (\frac{\mu_1}{\mu_{i^*}})^{\delta} e^{\a |\cS|})^N N^{\mid \cS\mid}\end{equation}
Now observe $ (\frac{\mu_1}{\mu_{i^*}})^{\delta} e^{\a |\cS|}<1$ by \ref{eq_exp} and the other factor is a polynomial in $N$ so that we conclude that this expression goes to zero for $N\to \infty$.
Since $X_{i^*}\leq N$ we use Borel-Cantelli applied to sums of $$\pi_N(X_{i^*}\leq(1-\delta)N)=\pi_N(|1-\frac{X_{i^*}}{N}|>\delta)$$ and conclude $\frac{X_{i^*}}{N}\to 1$ a.s. as $N\to\infty$. The finiteness of the series follows by combination of the direct comparison test and the ratio test for sequences applied to the final inequality term we derived in \eqref{final}.
\end{proof}
\begin{remark}\label{conv_rem} Let $(a_n)_{n\in\N}$ be a sequence of positive real numbers with
\begin{equation}\label{eq_rat}\lim\limits_{m\to\infty}\frac{a_{m+1}}{a_m}= b.\end{equation}
If $b=1$ then this implies that for all $\a>0$ there exists $c_\a$ such that for all $m\in \Z_{\geq 0}$
$$c_\a^{-1}e^{-\a m}\leq a_m\leq c_\a e^{\a m}.$$
If $0\leq b<1$ then this implies that for all $\a>0$ there exists $c_\a$ such that for all $m\in \Z_{\geq 0}$
$$ a_m\leq c_\a e^{\a m}.$$
These conclusions follow directly from \eqref{eq_rat} by limited growth (arguments e.g. as usedin the ratio test for series), also see \cite[3.2 Generalizations]{inclusion_proc}.
\end{remark}

As an application of Theorem \ref{FirstCondensationResuslt} and Remark \ref{conv_rem} we show that some asymmetric autocatalytic CRNs exhibit condensation as in definition \ref{condens_ab} (C3) if they have at least bimolecular reactions. 

\begin{corollary} \label{Corollarycondensation}
Let $(\cG^*,\k)$ be an autocatalytic CRN with highest molecularity denoted by $n^*$. Assume $n^*\geq 2$ and that there is a $S_{i^*}\in\cS$ with incoming reaction of molecularity $n_{i^*}=n^*$ such that for all other species $S_j\in\cS$ of the same molecularity $n_j=n^*$ we have
$$\la_{i^*}\be_{i^*}^{n^*}>\la_j\be_j^{n^*}$$
 Then $(\cG^*,\k)$ shows a weak form of condensation as in definition \ref{condens_ab} (C3) and we have a strong law of large numbers $\frac{X_{i^*}}{N}\to 1$ a.s. as $N\to\infty$.
\end{corollary}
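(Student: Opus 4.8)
The plan is to obtain the corollary as a direct application of Theorem \ref{FirstCondensationResuslt}. By Theorem \ref{prod_th} the stationary distribution on $\Ga_N$ is $Z_N^{-1}\prod_{S_i\in\cS}\la_i^{x_i}p_i(x_i)$, with $p_i$ as in \eqref{fi}; I would rewrite each factor $f_i(m)=\la_i^m p_i(m)$ as $\mu_i^m w_i(m)$, choosing $\mu_i=\la_i\be_i^{n^*}$ for the species with $n_i=n^*$ and $\mu_j=\mu_{i^*}/2$ for those with $n_j<n^*$, and setting $w_i(m)=f_i(m)/\mu_i^m$. This is an exact rewriting, so Theorem \ref{FirstCondensationResuslt} applied with $S^*=\{S_{i^*}\}$ yields both (C3) and the strong law $X_{i^*}/N\to1$ as soon as its two hypotheses are verified.

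Hypothesis (1), $\mu_{i^*}>\mu_j$, is immediate: for $n_j=n^*$ it is exactly the standing assumption $\la_{i^*}\be_{i^*}^{n^*}>\la_j\be_j^{n^*}$, and for $n_j<n^*$ it holds because $\mu_j=\mu_{i^*}/2$. For hypothesis (2) I would first record, by a direct computation from the product \eqref{fi} (refining Lemma \ref{growth_lemma}), that $f_i(m+1)/f_i(m)=\la_i\be_i^{n_i}m^{n_i-2}(1+o(1))$. Hence for $n_i=n^*$ one has $w_i(m)=(m!)^{n^*-2}u_i(m)$ with $u_i(m+1)/u_i(m)\to1$, so $c_\a^{-1}e^{-\a m}\le u_i(m)\le c_\a e^{\a m}$ by Remark \ref{conv_rem}; and for $n_j<n^*$ one has $w_j(m)\le c_\a e^{\a m}B^m(m!)^{n_j-2}$, where $B=\la_j\be_j^{n_j}/\mu_j$ is a fixed constant.

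It then remains to bound $w_j(M-r)w_{i^*}(r)$ by $c_{\a,j}e^{\a M}w_{i^*}(M)$ uniformly in $r\in\{0,\dots,M\}$. For $n_j=n^*$ (including $j=i^*$) the estimates above give $w_j(M-r)w_{i^*}(r)\le c_\a^2 e^{\a M}((M-r)!\,r!)^{n^*-2}$, and the elementary inequality $(M-r)!\,r!\le M!$ together with $w_{i^*}(M)\ge c_\a^{-1}e^{-\a M}(M!)^{n^*-2}$ closes this case after renaming $2\a$ as $\a$. For $n_j<n^*$, writing $s=M-r$ and $(M-s)!/M!=(\binom{M}{s}s!)^{-1}$ reduces $w_j(M-r)w_{i^*}(r)/w_{i^*}(M)$, up to the factor $c_\a^3 e^{2\a M}$, to $B^s/\big(\binom{M}{s}^{n^*-2}(s!)^{n^*-n_j}\big)$; since $\binom{M}{s}\ge1$ and $n^*-n_j\ge1$ this is at most $B^s/s!\le e^B$, a constant independent of $M$.

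The main obstacle is precisely this last case. A species of molecularity $n_j<n^*$ may have exponential rate $\la_j\be_j^{n_j}$ exceeding $\mu_{i^*}$, so one cannot make its base smaller than $\mu_{i^*}$ while keeping $w_j$ subexponential; instead the surplus exponential $B^s$ must be absorbed by a genuine factorial. That factorial comes either from the super-exponential growth $(m!)^{n^*-2}$ of $w_{i^*}$ (for $n^*\ge3$) or from the $1/m!$ already present in $w_j$ (for $n^*=2$), the decisive inputs being $(M-r)!\,r!\le M!$ and $\sup_s B^s/s!=e^B$. When $n^*=2$ the factor $(m!)^{n^*-2}$ is trivial and the whole argument collapses to the plain ratio estimate of Remark \ref{conv_rem}, recovering the generalisation of \cite[Theorem 3.1]{inclusion_proc}.
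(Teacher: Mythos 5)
Your proof is correct and follows the same overall strategy as the paper: both reduce the corollary to Theorem \ref{FirstCondensationResuslt} by factorizing $f_i(m)=\mu_i^m w_i(m)$ with $\mu_i=\la_i\be_i^{n^*}$ for the top-molecularity species and a base strictly below $\mu_{i^*}$ for the others (the paper takes $\mu_{i^*}-\epsilon$ where you take $\mu_{i^*}/2$; this is immaterial). Where you genuinely diverge is in the verification of hypothesis (2). The paper splits by molecularity: for $n^*=2$ it identifies $w_k(m)$ with a Gamma-function ratio and invokes Wendel's inequality to get $w_k(m)\simeq c\,m^{1/\be_k^2-1}$, hence ratio limit $1$ and Remark \ref{conv_rem}; for $n^*>2$ it defers to the appendix (Lemma \ref{inequ_lem2}, really the pointwise bounds of Lemma \ref{inequ_lem1}). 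You instead extract the super-exponential part explicitly as $(m!)^{n_i-2}$ from the ratio asymptotics $f_i(m+1)/f_i(m)=\la_i\be_i^{n_i}m^{n_i-2}(1+o(1))$, and then close hypothesis (2) with the two elementary inequalities $(M-r)!\,r!\le M!$ and $\sup_s B^s/s!=e^B$, the latter absorbing the possibly supercritical exponential base $B=\la_j\be_j^{n_j}/\mu_j>1$ of a lower-molecularity species into the factorial gap $(s!)^{n^*-n_j}$. This buys a single uniform argument valid for all $n^*\ge 2$ and all mixtures of molecularities, self-contained and independent of both Wendel's inequality and the appendix lemmas; the paper's route is shorter on the page but leans on external estimates and leaves the mixed-molecularity bookkeeping implicit. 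Your identification of the $n_j<n^*$ case as the real obstacle, and of the factorial as the mechanism that absorbs the surplus exponential, is exactly the content that the paper's citation of Lemma \ref{inequ_lem2} hides.
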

%\todo{Details of proof, connection to THM above}
\begin{proof}By assumption and Theorem \ref{prod_th} we have that 
$$\pi_{\cG,N}(x)=Z_\Ga^{-1}\prod_{S_i\in\cS}^{ }f_i(x_i)=Z_\Ga^{-1}\prod_{S_i\in\cS}^{ } \lambda_i^{x_i} p_i(x_i)$$
for $x$ in the corresponding irreducible component. It is enough to show that we can find product-form functions $\mu_i,w_i(m)$ such that the conditions of Theorem \ref{FirstCondensationResuslt} are satisfied with $\mu_i^mw_i(m)=\la_i^mp_i(m)$ for all $m\in \N$. We distinguish cases
\begin{enumerate}
\item[($n^*=2$)] Let $\la_j\be^2_j=\max\{\la_i\be^2_i|S_i\in\cS\setminus S_{i^*}\}$, and we can assume $\la_j\be^2_j\neq 0$ since otherwise the statement is trivial. Then for the species $S_k$ with $\be^2_k\neq 0$ we choose
$$\mu_k=\la_k\be^2_k,\quad w_k(m)=\frac{p_k(m)}{(\be_k^2)^m}=\frac{\Gamma(\frac{1}{\be_k^2}+m)}{m!\Gamma(\frac{1}{\be_k^2})}$$
and for species with $\be^2_k=0$ we choose a small  $\epsilon>0$ such that
$$\mu_k=\la_{i^*}\be^2_{i^*}-\epsilon,\quad w_k(m)=\frac{p_k(m)\la_k^m}{\mu_k^m}=\frac{\la_k^m}{m!(\la_{i^*}\be^2_{i^*}-\epsilon)^m}$$
Now we go through the assumptions of Theorem \ref{FirstCondensationResuslt}; (1) follows by definition. To prove (2) we first recall the asymptotic description for Gamma function following Wendel's inequality from \cite{wendel}. This gives
$$\frac{\Gamma(x+y)}{\Gamma(x)}\simeq x^y\text{ for }y\geq 0,x\to\infty.$$
Applying this to our product-form functions $w_k(m)$ of species with $\be_k^2\neq0$ gives
$$w_k(m)=\frac{\Gamma(\frac{1}{\be_k^2}+m)}{m\Gamma(m)\Gamma(\frac{1}{\be_k^2})}\simeq \frac{1}{\Gamma(\frac{1}{\be_k^2})}m^{\frac{1}{\be_k^2}-1}=c\cdot m^{\frac{1}{\be_k^2}-1}$$ 
for a constant $c>0$. In particular we have that the limit 
$$\lim\limits_{m\to\infty}\frac{w_k(m+1)}{w_k(m+1)}= b$$
exists in both cases, if $\be^2_k=0$ then $b=0$, and if $\be^2_k\neq 0$ then $b=1$.
From this and Remark \ref{conv_rem} it is easy to see that (2) is satisfied.
\item[($n^*>2$)] The same principle applies to cases with higher molecularity, condition (2) is then a special case of Lemma \ref{inequ_lem2}.
\end{enumerate}
\end{proof}
\begin{remark} Theorem \ref{FirstCondensationResuslt} also allows an interpretation along remark \ref{cond_conditioning},  with a condensation phenomena for a family of independent random variables $Y_1,\cdots ,Y_n$ with values in $\Z_{\geq 0}$,%(i.e. they factoring like (\ref{ass_prod}))
%$$P(Y_i=m)=\frac{1}{Z_i}w_i(m)\mu_i^m,$$
%where $Z_i$ is the normalizing constant, under the following hypothesis:
%\begin{enumerate}
%\item The $\mu_i$ are such that
%$$\mu_{i^*}>\max_{j\in \{1,\cdots n\}\setminus i^*}\mu_j$$
%\item For $j\in \{1,\cdots n\}$ we have the following
%$$\lim\limits_{m\to\infty}\frac{w_j(m+1)}{w_j(m)}\leq 1$$
%and for $i^*$ we have equality.
%\end{enumerate}
%Then we conclude 
%with condensation as in Theorem \ref{FirstCondensationResuslt} along conditional probabilities
%$$P_N=P\Big(\cdot|\sum_{j=1}^n Y_j=N\Big)$$
%This is similar to
 as in \cite[3.2 Generalizations]{inclusion_proc} but without assumption of existence of limit \eqref{exist_lim} and normalizability.%, following from Theorem \ref{FirstCondensationResuslt}.
 %and follows as Corollary \ref{Corollarycondensation} from Remark \ref{conv_rem}
 %, however the limit in (2) is also allowed to be zero for $j\neq i^*$. According to Theorem \ref{FirstCondensationResuslt} the same conclusion can hold in different cases when the limit in (2) does not exist.
\end{remark}
Next we show that the stationary distribution asymptotically concentrates on the disjoint singleton sets $\{x\in \Ga_N|x_j=N\}\subset\Z^n_{\geq 0}$ where $S_j$ is a species with maximal product-form function $f_j=f$ (maximal as in the sense below). This confirms existence of the strongest version of condensation as in definition \ref{condens_ab} (C1) in the three- or higher molecular autocatalytic CRN. By Lemma \ref{cond_lem_1} this implies all other forms of condensation.
The result is similar to \cite[Proposition 2.1]{Bianchi16}, where they studied a reversible inclusion process whose diffusion constant decreases along irreducible components. Moreover the proof of \cite[Proposition 3.2]{Bianchi16} follows a similar strategy. 
%Hence for autocatalytic CRN with three- or higher molecular reactions an analogue to \cite[Proposition 2.1]{Bianchi16} is true with constant diffusion (or rates). The idea of proof for the result develops similarly to Theorem \ref{FirstCondensationResuslt}, however we proceed by induction and use Lemmas \ref{inequ_lem1},\ref{inequ_lem2}.

\begin{theorem}\label{SecondCondensationResult}Let $(\cG^*,\k)$ be an autocatalytic CRN on the set of species $\cS=\{S_1,\cdots , S_n \}$ with highest molecularity denoted by $n^*$. Assume $n^*\geq 3$ and that the first $k\geq 1$ species $\{S_1,\cdots,S_k\}$ have the same product-form function $f$(determined by $\la_1(1,\be_1^2\cdots,\be_1^{n_1})$ see Definition \ref{autocatalyticCRN}, Theorem \ref{MainFormula}) with molecularity $n_1=n^*$ such that for any $S_i\in \cS\setminus \{S_1,\cdots,S_k\}$ of the same molecularity $n_i=n^*$ we have $$\la_1\be_1^{n_1}>\la_i\be_i^{n_i}.$$
Then the stationary stochastic dynamics satisfies the following for $S_j\in \{S_1,\cdots,S_k\}$
\begin{equation}\label{str_cond_form}\lim\limits_{N \to \infty} \pi_{\cG,N}(X_j= N)=\frac{1}{k}.\end{equation}
This implies condensation as in definition \ref{condens_ab} (C1) for $(\cG^*,\k)$, by Lemma \ref{cond_rem_1}.
\end{theorem}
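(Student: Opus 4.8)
The plan is to reduce the statement to a sharp asymptotic estimate of the normalising constant $Z_N=Z_{\Ga_N}$ and then to analyse that estimate by organising the partition function around the $k$ ``corner'' configurations. First I would observe that, since every $x\in\Ga_N$ satisfies $|x|=N$, the event $\{X_j=N\}$ is realised by the single state $x=Ne_j$, and because $f_i(0)=\la_i^0 p_i(0)=1$ for every $i$, Theorem \ref{prod_th} gives
$$\pi_{\cG,N}(X_j=N)=\frac{f_j(N)}{Z_N}=\frac{f(N)}{Z_N}\qquad(j\le k),$$
where $f=f_1=\dots=f_k$ is the common maximal product-form function. Thus \eqref{str_cond_form} for every $j\le k$ is equivalent to the single asymptotic statement $Z_N/f(N)\to k$, and the conclusion (C1) then follows from Lemma \ref{cond_rem_1}.

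Next I would extract the growth of the product-form functions from their explicit form \eqref{fi}. For a species of molecularity $n_i$ the dominant factor in $p_i(m)$ for large $l$ is the top term $\be_i^{n_i}\prod_{r=1}^{n_i-1}(l-r)\sim\be_i^{n_i}l^{\,n_i-1}$, so that, exactly as in the proof of Corollary \ref{Corollarycondensation} via Wendel's inequality \cite{wendel}, $f_i(m)=\la_i^m p_i(m)\asymp(\la_i\be_i^{n_i})^m (m!)^{\,n_i-2}$ up to a subexponential (polynomial) factor. Writing $\gamma_i:=\la_i\be_i^{n_i}$ and $\gamma:=\la_1\be_1^{n^*}$, two consequences drive the proof: (i) a domination bound $f_i(m)\le C f(m)$ valid for all $i$ and all $m$ (for $\gamma_i<\gamma$ of top molecularity this is exponential decay, for $n_i<n^*$ it is factorial decay); and (ii) a ``cost of splitting'' inequality $f(a)f(b)\le C\,\binom{a+b}{a}^{-(n^*-2)}f(a+b)$, again up to polynomial factors, which is where the hypothesis $n^*\ge 3$ (hence $n^*-2\ge 1$) enters decisively.

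I would then decompose $Z_N=\sum_{|x|=N}\prod_i f_i(x_i)$ into three parts. The $k$ maximal corners $x=Ne_j$ ($j\le k$) contribute exactly $k\,f(N)$. The $n-k$ remaining corners $x=Ne_i$ ($i>k$) contribute $\sum_{i>k}f_i(N)=o(f(N))$ by the exponential/factorial decay in (i). For the spread configurations (at least two positive coordinates) I would group by the deficit $r:=N-\M(x)\ge 1$: choosing the argmax site ($\le n$ ways) and distributing the remaining $r$ particles among the other sites ($O(r^{\,n-2})$ ways), and bounding $\prod_i f_i(x_i)\le C\,f(\M(x))\,f(r)$ via the domination and splitting inequalities, yields
$$\frac{1}{f(N)}\sum_{x\ \mathrm{spread}}\prod_i f_i(x_i)\ \le\ C\sum_{r=1}^{N}r^{\,n-2}\binom{N}{r}^{-(n^*-2)}.$$
Using $\binom{N}{r}\ge (N/r)^r$, the $r$-th term is at most $r^{\,n-2}(r/N)^{r(n^*-2)}$, and since $n^*-2\ge 1$ this series is dominated by its $r=1$ term $N^{-(n^*-2)}\to 0$. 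Hence $Z_N=k\,f(N)+o(f(N))$, giving $Z_N/f(N)\to k$ and the theorem.

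The main obstacle is precisely the balance in this last sum. A crude ``largest term times number of terms'' estimate is useless: there are $\asymp N^{\,n-1}$ states in $\Ga_N$, while the states one particle away from a corner are suppressed only by $N^{-(n^*-2)}$, so the product $N^{\,n-1}\cdot N^{-(n^*-2)}$ diverges. The content of the argument is therefore the organisation by the deficit $r$, which turns the estimate into a series whose per-level polynomial count $r^{\,n-2}$ is beaten by the factorial-type suppression $\binom{N}{r}^{-(n^*-2)}$. This is exactly where $n^*\ge 3$ is needed and cannot be relaxed: for $n^*=2$ the factor $(m!)^{\,n_i-2}$ disappears, splitting costs nothing at the factorial level, and one only obtains the weak condensation (C3) of Corollary \ref{Corollarycondensation} rather than (C1). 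A secondary technical point is to make the $\asymp$ and $f_i(m)\le C f(m)$ estimates uniform in $m$ (including small $m$) and to check that the accumulated subexponential corrections stay polynomial, so that they are harmless in the final series.
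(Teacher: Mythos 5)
Your proposal is correct, and it reaches the paper's key intermediate target --- the identity $\pi_{\cG,N}(X_j=N)=f(N)/Z_N$ together with the asymptotic $Z_N=(k+o(1))f(N)$ --- by a genuinely different organisation of the estimate. The paper proves $Z_N=(k+o(1))f(N)$ by induction on the number of species, writing $Z_N=\sum_{i}f_{n+1}(i)Z_{N-i,\cS\setminus S_{n+1}}$ and reducing everything to the two-species convolution bound $\sum_{i=1}^{N-1}f_1(i)f_2(N-i)\le f_1(N)\,o(1)$ of Lemma \ref{inequ_lem2}, which in turn rests on the pairwise ratio estimates of Lemma \ref{inequ_lem1}. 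You instead perform a single global decomposition of $\Ga_N$ by the deficit $r=N-\M(x)$, pay an entropy factor $O(r^{\,n-2})$ per level, and beat it with the splitting cost $\binom{N}{r}^{-(n^*-2)}$ coming from the super-exponential growth $f(m)\asymp (\la_1\be_1^{n^*})^m(m!)^{\,n^*-2}$. The two arguments use the same analytic core (your binomial splitting inequality is exactly the content of Lemma \ref{inequ_lem1}(1)--(4) in closed form), but your version makes the role of the hypothesis $n^*\ge 3$ more transparent and correctly identifies why a naive ``largest term times number of states'' bound fails; the paper's version keeps each inductive step elementary at the cost of hiding the combinatorics inside repeated convolutions. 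The points you flag as routine --- uniformity of the domination $f_i(m)\le Cf(m)$ down to small $m$, and control of the accumulated polynomial corrections (which, since $p_i(m)$ is a product of a fixed polynomial over $l=1,\dots,m$, amount to a single power $m^{c}$ by a Stirling/Wendel-type estimate, exactly as used in the proof of Corollary \ref{Corollarycondensation}) --- are indeed routine and do not hide a gap.
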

\begin{proof}
%By definition we have for $j\in \{1,\cdots,k\}$
%$$kf_j(N)=kf(N)\leq Z_N.$$
We will repeatedly use that for $S_i\in \cS\setminus \{S_1,\cdots,S_k\}$
\begin{equation}\label{equ_as}f_i(N)=o(1)f_1(N).\end{equation}
To show (\ref{str_cond_form}) it is enough to prove that 
$$Z_N=(k+o(1))f_1(N),$$
we prove it by induction on the number of species for $Z_{N}=\sum_{x\in \Z_{\geq 0}^n, |x|_1=N}\prod_{i=1}^nf_i(x_i)$.

\begin{enumerate}
\item{$|\cS|=2$:}
We write the partition function as
$$Z_N= f_1(N)+f_2(N)+ \sum_{i=1}^{N-1}f_1(i)f_2(N-i)=f_1(N)(1+o(1))+f_2(N)$$
We are done by combining Lemma \ref{inequ_lem2} with equation \ref{equ_as} by distinguishing the cases $f=f_2$ or $\la_1\be_1^{n_1}>\la_2\be_i^{n_2}$.
\item{$|\cS|=n\to |\cS|=n+1$:} Assume $\cS\setminus S_{n+1}$ has j species with product-form function $f$. We denote for a subset of species $A\subseteq \cS$
$$Z_{N,A}:=\sum_{x\in \Z_{\geq 0}^{A}, |x|_1=N}\prod_{S_i\in A}f_i(x_i)$$ to write the partition function as follows
$$Z_N=\sum_{i=0}^{N}f_{n+1}(i)Z_{N-i,\cS\setminus S_{n+1}}
=f_{n+1}(N)+Z_{N,\cS\setminus S_{n+1}}+\sum_{i=1}^{N-1}f_{n+1}(i)Z_{N-i,\cS\setminus S_{n+1}}.$$
We apply the induction hypothesis on the $Z_{N-i,\cS\setminus S_{n+1}}$ and get
$$Z_N= f_{n+1}(N)+f(N)(j+o(1))+\sum_{i=1}^{N-1}f_{n+1}(i)f(N-i)[j+o(1)].$$
Factorizing out $f(N)$ we obtain
$$Z_N=f_{n+1}(N)+f(N)(j+o(1)+\sum_{i=1}^{N-1} \frac{f_{n+1}(i)f(N-i)}{f(N)}[j+o(1)])$$
We apply Lemma \ref{inequ_lem2} and get
%$$Z_N=f_{n+1}(N)+ f(N)(j+o(1)+o(1)[j+o(1)])$$
$$Z_N=f_{n+1}(N)+ f(N)\big (j+o(1)\big )$$
Now if $f_{n+1}$ is also a maximal product-form function $f$, we obtain $Z_N=f(N)(j+1+o(1))$, otherwise using identity \ref{equ_as} we stay with 
$Z_N=f(N)(j+o(1))$.
\end{enumerate}

\end{proof}

\begin{remark} Both Corollary \ref{Corollarycondensation} and Theorem \ref{SecondCondensationResult} are based on the asymptotic analysis of the product-form functions in the stationary distribution. Hence the results carry over to mass preserving CRNs which have a complex balanced (hence Poisson product-form function) and an autocatalytic part( see Remark \ref{re_cx_bal}, Example \ref{ex_cx_bal} or Definition \ref{tauto_def}). The asymptotic analysis of the product-form should also be related to corresponding open CRNs.
\end{remark}

\appendix
\section{Technical results on product-form functions}
We give two lemmas providing rough estimates for the product-form functions for the three-molecular or higher case. The proofs are straightforward, the first Lemma follows by inspection of quotients of product-form functions $f_i$ of Theorem \ref{prod_th}.%, which has the form
%$$f(m)=\frac{\la_i^m}{m!}\prod_{l=1}^m(1+\sum_{k=2}^{n_i}\be_i^k \prod_{o=1}^{k-1}(l-o)).$$
\begin{lemma} \label{inequ_lem1}\hfill\break
Let $f_i=f$ be a product-form function as in $\S$ \ref{exa} with $\be_i^3>0$. Then there is a $c>0$ and a $N_0\in \N$ such that for all $N>N_0$ we have
\begin{itemize}
\item[(1)] $\frac{f(N-1)f(1)}{f(N)}\leq c \frac{1}{N-3}$.
\item[(2)] $\frac{f(N-2)f(2)}{f(N)}\leq c \frac{1}{(N-3)(N-4)}$.
\item[(3)] $\frac{f(N-3)f(3)}{f(N)}\leq c  \frac{1}{(N-3)(N-4)(N-5)}$.
\item[(4)] For $\frac{N}{2}\geq i\geq 3$ we have $f(N-i)f(i)\leq f(N-3)f(3)$.
\end{itemize}
\end{lemma}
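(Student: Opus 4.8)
The plan is to reduce everything to ratios of consecutive values of $f$ and to the growth of a single auxiliary sequence. Writing $f(m)=\lambda^m\frac{1}{m!}\prod_{l=1}^m a(l)$ with
$$a(l):=1+\sum_{k=2}^{n_i}\beta_i^k\prod_{r=1}^{k-1}(l-r),$$
I would first record the elementary facts $a(1)=1$ and $f(m+1)/f(m)=\lambda\, a(m+1)/(m+1)$, together with the one growth estimate that drives the whole lemma: since $\beta_i^3>0$ and $f$ is a genuine positive product-form function (so its top coefficient is positive), $a$ grows at least quadratically, i.e.\ $a(l)\sim \beta_i^{n_i}l^{\,n_i-1}$ with $n_i\ge 3$, and there are $c_0>0$, $L_0$ with $a(l)\ge c_0 l^2$ for $l\ge L_0$.

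For (1)--(3) I would telescope the ratio into the closed form
$$\frac{f(N-j)f(j)}{f(N)}=\frac{1}{j!}\Big(\prod_{l=1}^j a(l)\Big)\prod_{m=N-j+1}^{N}\frac{m}{a(m)},\qquad j\in\{1,2,3\},$$
which for $j=1$ is exactly $N/a(N)$, for $j=2$ equals $\tfrac{1+\beta_i^2}{2}\tfrac{N(N-1)}{a(N-1)a(N)}$, and so on. Substituting $a(m)\ge c_0 m^2$ into the denominator and bounding the numerator by $N^j$ turns each expression into something of order $N^{-j}$; since the claimed bounds $\frac{c}{N-3}$, $\frac{c}{(N-3)(N-4)}$, $\frac{c}{(N-3)(N-4)(N-5)}$ are themselves of order $N^{-j}$, the inequalities follow for a suitable $c$ once $N$ exceeds some $N_0$. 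This part is routine and the precise shifts $-3,-4,-5$ are simply absorbed into $c$ and $N_0$.

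The substantive step is (4), which I would prove by showing the split-product $G(i):=f(N-i)f(i)$ is maximal at $i=3$ on the range $3\le i\le N/2$. Telescoping as above yields the exact identity
$$\frac{f(N-i)f(i)}{f(N-3)f(3)}=\prod_{k=1}^{i-3}\frac{\psi(3+k)}{\psi(N-i+k)},\qquad \psi(l):=\frac{a(l)}{l},$$
so it suffices that every factor be $\le 1$, i.e.\ that $\psi(3+k)\le \psi(N-i+k)$. Here I would use two features of $\psi$: from the quadratic growth, $\psi(l)\sim\beta_i^{n_i}l^{\,n_i-2}$ is eventually nondecreasing (one checks $l\,a(l+1)-(l+1)a(l)>0$ for large $l$) and $\psi(l)\to\infty$; and the two arguments in each factor differ by $N-i-3\ge N/2-3$, a gap growing with $N$.

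The main obstacle is making $\psi(3+k)\le\psi(N-i+k)$ hold \emph{uniformly} in both $k$ and $i$, since the smaller argument $3+k$ may fall below the threshold $L_0$ where $\psi$ starts to increase. I would split cases: if $3+k\ge L_0$ the inequality is immediate from monotonicity because $N-i+k>3+k$; if $3+k<L_0$ then $\psi(3+k)\le M_0:=\max_{4\le m<L_0}\psi(m)$, while the larger argument satisfies $N-i+k\ge N/2+1$, so choosing $N_0$ with $N/2+1\ge L_0$ and $\psi(N/2+1)\ge M_0$ gives $\psi(N-i+k)\ge M_0\ge\psi(3+k)$ for $N>N_0$. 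As this $N_0$ depends only on the coefficients $\beta_i^k$ (through $L_0$ and $M_0$) and not on $i$, the comparison is uniform and (4) follows. Taking the final $c$ and $N_0$ to be the maxima of those produced in the four parts completes the proof.
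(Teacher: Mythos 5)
Your proof is correct, and it takes exactly the route the paper indicates: the paper omits the argument, stating only that the lemma ``follows by inspection of quotients of product-form functions $f_i$'', and your telescoping of everything into the consecutive ratios $f(m+1)/f(m)=\lambda\,a(m+1)/(m+1)$, with the quadratic lower bound $a(l)\geq c_0 l^2$ (from $\be_i^3>0$) driving (1)--(3) and the eventual monotonicity of $\psi(l)=a(l)/l$ driving (4), is precisely that inspection carried out in detail. The uniform case split on whether $3+k$ lies below the monotonicity threshold is the right way to close the one point the paper's one-line remark glosses over.
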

Putting the inequalities derived in Lemma \ref{inequ_lem1} together, we can
bound the partition function of the two species case for maximal molecularity higher than two (i.e. $\be_i^3>0$), the simple proof is omitted.
\begin{lemma} \label{inequ_lem2}\hfill\break
Let $f_1,f_2$ be product-form functions as in $\S$ \ref{exa} with $n_1,n_2$ as in (4) of Definition \ref{autocatalyticCRN} $$m=n_1\geq 3, n_1\geq n_2\text{ and }\la_1\be_1^m>\la_2 \be_2^m.$$ Then we have :
$$\sum_{i=1}^{N-1}f_1(i)f_2(N-i)\leq f_1(N)o(1)$$
where $o$ is small o from Bachmann-Landau notation.
\end{lemma}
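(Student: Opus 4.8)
The plan is to reduce the mixed convolution to a self-convolution of the dominant function $f_1$ and then read off the decay from Lemma \ref{inequ_lem1}.

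First I would record the growth of the product-form functions. From $f_i(m)=\lambda_i^m\frac{1}{m!}\prod_{l=1}^m(1+\sum_{k=2}^{n_i}\beta_i^k\prod_{r=1}^{k-1}(l-r))$ the top factor behaves like $\beta_i^{n_i}\prod_{r=1}^{n_i-1}(l-r)\sim\beta_i^{n_i}l^{n_i-1}$, so $f_i(m)\sim C_i(\lambda_i\beta_i^{n_i})^m(m!)^{n_i-1}$ for some $C_i>0$ (compatible with Lemma \ref{growth_lemma}). Under $n_1\ge n_2$ and $\lambda_1\beta_1^m>\lambda_2\beta_2^m$ (with $m=n_1$) the quotient $f_2(k)/f_1(k)$ tends to $0$: if $n_2<n_1$ the extra factor $(k!)^{n_2-n_1}\to0$ forces it, and if $n_2=n_1$ the base ratio $(\lambda_2\beta_2^m/\lambda_1\beta_1^m)^k\to0$ does. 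Being positive and finite for each fixed $k$ and vanishing at infinity, the quotient is bounded, so there is a $C>0$ with $f_2(k)\le C f_1(k)$ for all $k\ge0$.

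With this I would bound $\sum_{i=1}^{N-1}f_1(i)f_2(N-i)\le C\sum_{i=1}^{N-1}f_1(i)f_1(N-i)$ and show the self-convolution is $o(1)f_1(N)$. Splitting the index set, the four endpoint terms $i\in\{1,2,N-2,N-1\}$ are controlled by parts (1) and (2) of Lemma \ref{inequ_lem1} (applied to $f_1$), contributing $O(f_1(N)/N)$. For the bulk $3\le i\le N-3$, part (4) (log-convexity of $f_1$, so $f_1(i)f_1(N-i)$ is largest at the innermost index) bounds every term by $f_1(N-3)f_1(3)$, which part (3) in turn bounds by $c\,f_1(N)/((N-3)(N-4)(N-5))$; with at most $N$ such terms the bulk contributes $O(f_1(N)/N^2)$. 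Adding up gives $\sum_{i=1}^{N-1}f_1(i)f_1(N-i)=O(f_1(N)/N)=o(1)f_1(N)$, and the factor $C$ is absorbed into the $o(1)$.

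The one delicate point is that Lemma \ref{inequ_lem1} is stated for a single function with $\beta_i^3>0$, whereas here I only know $\beta_1^{n_1}>0$ with $n_1=m\ge3$; I would note that its estimates (1)--(4) are upper bounds depending only on the super-factorial, log-convex growth driven by the leading coefficient $\beta_1^{n_1}$, and so persist (with room to spare) for any highest molecularity $n_1\ge3$. The genuine obstacle is thus not any individual estimate but the bookkeeping showing that the whole bulk sum, of order $N$ terms, is dwarfed by the $1/N^3$-type decay of its largest term---in other words, that the convolution concentrates at its endpoints. An alternative that bypasses the reduction is to bound $f_1(i)f_2(N-i)$ directly, again using log-convexity in $i$ to locate its maximum at $i=3$ or $i=N-3$; but passing through the self-convolution of $f_1$ keeps the argument shortest.
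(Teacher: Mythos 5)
Your argument is correct and follows the route the paper intends: the paper omits the proof of Lemma \ref{inequ_lem2} but states that it follows by "putting together" the estimates (1)--(4) of Lemma \ref{inequ_lem1}, which is exactly what you do (endpoints via (1)--(2), bulk via (4) then (3)) after the valid preliminary domination $f_2(k)\le C f_1(k)$ deduced from $n_1\ge n_2$ and $\la_1\be_1^m>\la_2\be_2^m$. Your closing caveat about extending Lemma \ref{inequ_lem1} from the stated hypothesis $\be_i^3>0$ to a general highest molecularity $n_1\ge 3$ flags a mismatch that is present in the paper itself, and your resolution (the super-factorial, log-convex growth only strengthens the bounds for higher molecularity) is the right one; the only slip is the harmless asymptotic $f_i(m)\sim C_i(\la_i\be_i^{n_i})^m(m!)^{n_i-1}$, which should read $(m!)^{n_i-2}$ after dividing by $m!$, but this cancels in the ratio $f_2/f_1$ and does not affect the argument.
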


 \bibliographystyle{plain}

 \bibliography{references} 
 
\end{document}